
\documentclass[12pt]{article}
\usepackage{amsmath}
\usepackage{graphicx}
\usepackage{amsfonts}
\usepackage{verbatim}
\usepackage{latexsym}
\usepackage{epstopdf}
\usepackage{color}

\usepackage{a4}
\textheight 9.3in
\textwidth 6.3in
\oddsidemargin 0pt
\evensidemargin 0pt
\topmargin -0.6 in
\parskip 3pt
\parindent 0.3in

\bibliographystyle{plain}

\makeatletter
\@addtoreset{equation}{section}

\begin{document}

\newcommand{\EE}{\mathbb{E}}
\newcommand{\PP}{\mathbb{P}}
\newcommand{\RR}{\mathbb{R}}
\newcommand{\SM}{\mathbb{S}}
\newcommand{\ZZ}{\mathbb{Z}}
\newcommand{\ind}{\mathbf{1}}
\newcommand{\LL}{\mathbb{L}}
\def\F{{\cal F}}
\def\G{{\cal G}}
\def\P{{\cal P}}

\newtheorem{theorem}{Theorem}[section]
\newtheorem{lemma}[theorem]{Lemma}
\newtheorem{coro}[theorem]{Corollary}
\newtheorem{defn}[theorem]{Definition}
\newtheorem{assp}[theorem]{Assumption}
\newtheorem{cond}[theorem]{Condition}
\newtheorem{expl}[theorem]{Example}
\newtheorem{prop}[theorem]{Proposition}
\newtheorem{rmk}[theorem]{Remark}
\newtheorem{conj}[theorem]{Conjecture}

\newcommand\tq{{\scriptstyle{3\over 4 }\scriptstyle}}
\newcommand\qua{{\scriptstyle{1\over 4 }\scriptstyle}}
\newcommand\hf{{\textstyle{1\over 2 }\displaystyle}}
\newcommand\hhf{{\scriptstyle{1\over 2 }\scriptstyle}}
\newcommand\hei{\tfrac{1}{8}}

\newcommand{\eproof}{\indent\vrule height6pt width4pt depth1pt\hfil\par\medbreak}

\def\a{\alpha}
\def\e{\varepsilon} \def\z{\zeta} \def\y{\eta} \def\o{\theta}
\def\vo{\vartheta} \def\k{\kappa} \def\l{\lambda} \def\m{\mu} \def\n{\nu}
\def\x{\xi}  \def\r{\rho} \def\s{\sigma}
\def\p{\phi} \def\f{\varphi}   \def\w{\omega}
\def\q{\surd} \def\i{\bot} \def\h{\forall} \def\j{\emptyset}

\def\be{\beta} \def\de{\delta} \def\up{\upsilon} \def\eq{\equiv}
\def\ve{\vee} \def\we{\wedge}

\def\D{\Delta} \def\O{\Theta} \def\L{\Lambda}
\def\X{\Xi} \def\Si{\Sigma} \def\W{\Omega}
\def\M{\partial} \def\N{\nabla} \def\Ex{\exists} \def\K{\times}
\def\V{\bigvee} \def\U{\bigwedge}

\def\1{\oslash} \def\2{\oplus} \def\3{\otimes} \def\4{\ominus}
\def\5{\circ} \def\6{\odot} \def\7{\backslash} \def\8{\infty}
\def\9{\bigcap} \def\0{\bigcup} \def\+{\pm} \def\-{\mp}
\def\la{\langle} \def\ra{\rangle}

\def\proof{\noindent{\it Proof. }}
\def\tl{\tilde}
\def\trace{\hbox{\rm trace}}
\def\diag{\hbox{\rm diag}}
\def\for{\quad\hbox{for }}
\def\refer{\hangindent=0.3in\hangafter=1}

\newcommand\wD{\widehat{\D}}
\newcommand{\ka}{\kappa_{10}}

\title{
\bf Stationary distribution of the stochastic theta method for nonlinear stochastic differential equations}

\author
{{\bf  Yanan Jiang, Wei Liu, Lihui Weng\footnote{Corresponding author, Email: 1151022187@qq.com}}
\\
Department of Mathematics, \\ Shanghai Normal University, Shanghai, China
}

\date{}

\maketitle

\begin{abstract}
The existence and uniqueness of the stationary distribution of the numerical solution generated by the stochastic theta method is studied. When the parameter $\theta$ takes different values, the requirements on the drift and diffusion coefficients are different. The convergence of the numerical stationary distribution to the true counterpart is investigated. Several numerical experiments are presented to demonstrate the theoretical results.

\medskip \noindent
{\small\bf Key words}: stochastic theta method, nonlinear stochastic differential equations,\\
 numerical stationary distribution.
\end{abstract}

\section{Introduction} \label{secintro}

The classical method to find the stationary distributions of some stochastic differential equations (SDEs) is to solve the corresponding Kolmogorov-Fokker-Planck equations. However, it is not trivial to find the solution to those partial differential equations when some nonlinearity appears in the drift or the diffusion coefficient of the SDEs. In this paper, the alternative path that the stationary distributions generated by some numerical methods for SDEs are used as the approximates to those of the underlying equations is investigated.
\par
In the series papers \cite{MYY2005a,YM2004a,YM2005a}, the authors studied the approximates to stationary distributions of SDEs and SDEs with Markovian switching by using the Euler-Maruyama method. In \cite{BSY2016}, the approximations of invariant measures of SDEs with different sorts of Markovian switchings were investigated using the Euler-Maruyama method. Both of the drift and diffusion coefficients of the SDEs in those papers above need to satisfy the global Lipschitz condition. As indicated in \cite{HJK2011a}, the classical Euler-Maruyama fails to convergence when either the drift or the diffusion coefficient grows super-linearly. To tackle this drawback, the backward Euler-Maruyama method was employed in \cite{LM2015} for those SDEs with the super-linear drift coefficient. Higher order methods were also discussed for sampling the invariant measures \cite{abdulle2017optimal,talay1990second}.
\par
In this paper, we study the numerical stationary distributions of the stochastic theta (ST) method and discuss the effect of the choice of the theta on the conditions of the coefficients. Different types of asymptotic properties of the stochastic theta method for SDEs have been widely investigated. In \cite{Hig2000a}, the stability of the ST method was studied in both the mean-square and almost sure senses. The stability of the ST method with nonrandom variable step sizes for bilinear, nonautonomous, homogenous test equations was investigated in \cite{Rod05}. The ST method was applied to a test system with stabilising and destabilising stochastic perturbations and almost sure asymptotic stability was analysed in \cite{BBKR2012a}. The abilities to preserve the almost sure and the mean square exponential stabilities were discussed for different choices of the theta in \cite{CW2012} and \cite{ZW2014}, respectively. The asymptotic boundedness of the ST method was studied in \cite{QLH2014}. The results presented in this paper could be regarded as a complement to the existing study of the asymptotic behaviours of the ST method.
\par
This paper is constructed in the following way. The necessary mathematical preliminaries are presented in Section \ref{secmathpre}. Section \ref{secmain} contains the main results. Some numerical examples are used to illustrate the theoretical results in Section \ref{secsimu}. We summarize the paper by Section \ref{secconclu}.

\section{Mathematical Preliminaries} \label{secmathpre}
In this paper, let $(\Omega , \F, \PP)$ be a complete probability space with a filtration $\left\{\F_t\right\}_{t \ge 0}$ satisfying the usual conditions that it is right continuous and increasing while $\F_0$ contains all $\PP$-null sets. Let $|\cdot|$ denote the Euclidean norm in $\RR^d$. The transpose of a vector or matrix, $M$, is denoted by $M^T$ and the trace norm of a matrix, $M$, is denoted by $|M| = \sqrt{\trace(M^T M)}$.
\par
Let $f,g : \RR^d \rightarrow \RR^d$. To keep symbols simple, let $B(t)$ be a scalar Brownian motion. The results in this paper can be extended to the case of multi-dimensional Brownian motions.
\par
We consider the $d$-dimensional stochastic differential equation of the It\^o type
\begin{equation}
 \label{SDE}
dx(t) = f(x(t))dt + g(x(t))dB(t)
\end{equation}
with initial value $x(0) = x_0$.
\par
Now, we present the conditions on the coefficients.

\begin{cond}
\label{ffgg}
 Assume there exists a constant $K_1>0$ such that for any $x,y \in \RR^d$
\begin{equation*}
 |f(x)-f(y)|^2\vee|g(x)-g(y)|^2\leq K_{1} |x-y|^2.
\end{equation*}
\end{cond}
\begin{cond}
\label{xyff}
 Assume there exists a constant $K_{2}<0$ such that for any $x,y \in \RR^d$
\begin{equation*}
 \langle x-y, f(x) - f(y)  \rangle \leq K_{2} |x-y|^2.
\end{equation*}
\end{cond}
In addition, we require that
\begin{equation}
\label{k1k2}
2K_{2}+K_{1}<0.
\end{equation}
The next two conditions can be derived from Conditions \ref{ffgg} and \ref{xyff} but with a little bit complicated coefficients. For the simplicity, we give two new conditions as follows.
\begin{cond}\label{monocond}
 \label{xf}
There exist constants $\mu< 0$ and $a>0$ such that for any $x \in \RR^d$
\begin{equation*}
 \langle x,f(x)\rangle \leq \mu|x|^2+a.
\end{equation*}
\end{cond}

\begin{cond}
\label{ligrfg}
There exist positive constants $\sigma$, $\kappa$, $b$ and $c$ such that
for any $x\in\mathbb{R}^d$
\begin{equation}
\label{gg}
 |g(x)|^2 \leq \sigma|x|^2+b,
\end{equation}
and
\begin{equation}
\label{ff}
 |f(x)|^2 \leq \kappa|x|^2+c,
\end{equation}
\end{cond}
In addition, we require that
\begin{equation}
\label{ms}
2\mu+\sigma<0.
\end{equation}
\par \noindent
The existence and uniqueness of the underlying SDE (\ref{SDE}) has been broadly studied. We refer the readers to Theorem 3.1 in \cite{YM2003a} for a quite general theory. There are other more general theories, the reason we refer the readers to this one is that the structure of it is similar to the following theory, Theorem \ref{mainthm}.
\par \noindent
The stochastic theta method to SDE (\ref{SDE}) is defined by
\begin{equation}
 \label{STM}
X_{k+1} = X_k + \theta f(X_{k+1})h +(1-\theta) f(X_{k})h+ g(X_k) \D B_k,~~~X_0 = x(0)= x_0,
\end{equation}
where $\D B_k = B(t_{k+1}) - B(t_k)$ is the Brownian motion increment and $t_k = kh$, for $k = 1,2,..$.
\par
The proof of the next lemma is similar to those in \cite{LM2015,MS2013a}.
\begin{lemma}
\label{pre1}
Let Condition \ref{ffgg} \ref{xyff} hold and $\theta hK_2<1$, the ST method \eqref{STM} is well defined.
\end{lemma}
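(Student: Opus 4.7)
The plan is to reduce the well-posedness of the implicit scheme \eqref{STM} to a fixed-point / root-finding problem for a single map on $\RR^d$, and then exploit the one-sided Lipschitz condition to show this map is a bijection.

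First I would fix $X_k$ and the increment $\D B_k$, and set $A := X_k + (1-\theta)f(X_k)h + g(X_k)\D B_k$. The existence and uniqueness of $X_{k+1}$ is then equivalent to the equation $y - \theta h f(y) = A$ having a unique solution $y \in \RR^d$. Define $G : \RR^d \to \RR^d$ by $G(y) := y - \theta h f(y)$; I need to show $G$ is a bijection.

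Next, using Condition \ref{xyff} on $f$, for any $y_1, y_2 \in \RR^d$,
\begin{equation*}
\langle y_1-y_2, G(y_1)-G(y_2)\rangle = |y_1-y_2|^2 - \theta h \langle y_1-y_2, f(y_1)-f(y_2)\rangle \ge (1-\theta h K_2)|y_1-y_2|^2.
\end{equation*}
Under the assumption $\theta h K_2 < 1$, the constant $1-\theta h K_2$ is strictly positive, so $G$ is uniformly monotone. Uniqueness of a root follows immediately: if $G(y_1)=G(y_2)=A$, the above inequality forces $y_1=y_2$. Continuity of $G$ is inherited from the global Lipschitz continuity of $f$ given by Condition \ref{ffgg}.

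For existence, I would invoke the classical theorem that a continuous, uniformly monotone map on $\RR^d$ is a homeomorphism of $\RR^d$ onto itself (a finite-dimensional version of Browder--Minty). Concretely, uniform monotonicity together with Cauchy--Schwarz gives the coercivity bound $|G(y)-G(0)| \ge (1-\theta h K_2)|y|$, so $|G(y)|\to\infty$ as $|y|\to\infty$; combined with continuity this yields surjectivity, and the monotone estimate yields injectivity. Hence $G^{-1}(A)$ is a singleton, which defines $X_{k+1}$ uniquely, and the ST method \eqref{STM} is well defined.

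I do not expect a real obstacle here — the argument is standard, and the condition $\theta h K_2<1$ is tailored precisely so that the implicit part of the scheme is a strongly monotone perturbation of the identity. The only point requiring mild care is noting that the whole argument is pathwise in $\omega$ (since $A$ depends on $X_k(\omega)$ and $\D B_k(\omega)$), so that $X_{k+1}$ is $\F_{t_{k+1}}$-measurable; this follows from the measurable selection provided by the uniqueness of $G^{-1}(A)$.
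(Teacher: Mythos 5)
Your proposal is correct and follows essentially the same route as the paper: define $G(y)=y-\theta h f(y)$, use Condition \ref{xyff} to get the uniform monotonicity bound $(1-\theta hK_2)|y_1-y_2|^2$, and conclude that $G$ is invertible so that $X_{k+1}=G^{-1}(X_k+(1-\theta)f(X_k)h+g(X_k)\D B_k)$ is well defined. Your write-up is in fact slightly more complete, since you supply the coercivity/surjectivity step (and the measurability remark) that the paper leaves implicit when it asserts the existence of $G^{-1}$.
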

\begin{proof}
It is useful to write \eqref{STM} as
\begin{equation*}
X_{k+1}-\theta f(X_{k+1})h  = X_k +(1-\theta) f(X_{k})h+ g(X_k) \D B_k.
\end{equation*}
Define a function $G:\mathbb{R}^d \rightarrow\mathbb{R}^d$ by $G(x)=x-f(x)\theta h$. Since
\begin{align*}
\langle x-y,G(x)-G(y)\rangle&\geq\langle x-y,x-y-\theta h(f(x)-f(y))\rangle\\
  &\geq|x-y|^2-\theta hK_2|x-y|^2\\
  &=(1-\theta hK_2)|x-y|^2>0,
\end{align*}
for  $\theta hK_2<1$, we know that $G$ has the
 inverse function $G^{-1}:\mathbb{R}^d \rightarrow\mathbb{R}^d$. And $G(x)$ is monotone. The ST method \eqref{STM} can be written as
\begin{equation}
\label{STM2}
X_{k+1} = G^{-1}(X_k +(1-\theta) f(X_{k})h+ g(X_k) \D B_k).
\end{equation}
Thus, the ST method \eqref{STM2} is well defined. \eproof
\end{proof}
\begin{lemma}
\label{pre2}
Let Conditions \ref{ffgg} to \ref{ligrfg} hold, then
\begin{equation}
\mathbb{P}(X_{k+1}\in B|X_k=x)=\mathbb{P}(X_1\in B|X_0=x),
\end{equation}
for any Borel set $B\subset\mathbb{R}^d$.
\end{lemma}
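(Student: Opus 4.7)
The plan is to recognise that the statement is really the time-homogeneous Markov property for the numerical scheme, and it will follow purely from the structure of the one-step recursion \eqref{STM2} together with the independent and stationary increments of the Brownian motion. The heavy Conditions \ref{ffgg}--\ref{ligrfg} are only used via Lemma \ref{pre1}, which guarantees that $G^{-1}$ exists.

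First I would verify that $G^{-1}$ is Borel-measurable. From Lemma \ref{pre1} the inequality $\langle x-y,G(x)-G(y)\rangle\ge (1-\theta h K_2)|x-y|^2$ shows that $G$ is strongly monotone, so by the standard monotone-operator estimate $|G^{-1}(u)-G^{-1}(v)|\le (1-\theta h K_2)^{-1}|u-v|$; in particular $G^{-1}$ is continuous and therefore Borel-measurable. Consequently the map
\begin{equation*}
F:\RR^d\times\RR\to\RR^d,\qquad F(x,w):=G^{-1}\bigl(x+(1-\theta)f(x)h+g(x)w\bigr),
\end{equation*}
is Borel-measurable, and \eqref{STM2} rewrites as $X_{k+1}=F(X_k,\D B_k)$.

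Next I would use the defining properties of Brownian motion: the increment $\D B_k=B(t_{k+1})-B(t_k)$ is independent of $\F_{t_k}$ (and hence of $X_k$, which is $\F_{t_k}$-measurable), and $\D B_k$ has the same $N(0,h)$ law as $\D B_0$. Therefore, for any Borel set $B\subset\RR^d$,
\begin{equation*}
\PP(X_{k+1}\in B\mid X_k=x)=\PP\bigl(F(x,\D B_k)\in B\bigr)=\PP\bigl(F(x,\D B_0)\in B\bigr)=\PP(X_1\in B\mid X_0=x),
\end{equation*}
where the first equality uses the independence of $\D B_k$ from $X_k$, the second uses the equality in law of $\D B_k$ and $\D B_0$, and the third uses $X_1=F(X_0,\D B_0)$ together with independence of $\D B_0$ and $X_0$.

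There is no real obstacle here; the only point that requires a moment of care is the measurability of $G^{-1}$, which is the reason I would include the strong-monotonicity estimate explicitly. Once that is in place the identity is immediate, so the ``proof'' is essentially one computation after unpacking the recursion \eqref{STM2}.
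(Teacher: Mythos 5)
Your argument is essentially the paper's own proof: both reduce the claim to writing the one-step recursion as a fixed measurable function of the current state and the Brownian increment, and then invoke the equality in law of $\D B_k$ and $\D B_0$ together with their independence from the past and the invertibility of $G$ from Lemma \ref{pre1}. The only difference is that you also justify the Borel measurability of $G^{-1}$ via the strong-monotonicity Lipschitz bound, a detail the paper leaves implicit; this is a correct and worthwhile addition but does not change the route.
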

\begin{proof}
 If $X_k = x$ and $X_0 = x$, by (\ref{STM}) we see
\begin{equation*}
 X_{k+1} - \theta f(X_{k+1})h = x + (1-\theta)f(x)h+ g(x) \D B_k,
\end{equation*}
and
\begin{equation*}
 X_1 - \theta f(X_1)h = x + (1-\theta)f(x)h+ g(x) \D B_0.
\end{equation*}
Because $\D B_k$ and $\D B_0$ are identical in probability law, comparing the two equations above, we know that $X_{k+1} - \theta f(X_{k+1})h$ and $X_1 - \theta f(X_1)h$
have the identical probability law. Then, due to Lemma \ref{pre1}, we have that $X_{k+1}$ and $X_1$ are identical in probability law under $X_k = x$ and $X_0 = x$.
Therefore, the assertion holds.   \eproof
\end{proof}

To prove Theorem \ref{hMp}, we cite the following classical result (see, for example, Lemma 9.2 on page 87 of \cite{M2008a}).
\begin{lemma}
\label{clas}
 Let $h(x,\omega)$ be a scalar bounded measurable random function of $x$, independent of $\F_s$. Let $\zeta$ be an $\F_s$-measurable random variable. Then
\begin{equation*}
 \EE (h(\zeta,\omega) \big\vert \F_s) = H(\zeta),
\end{equation*}
where $H(x) = \EE h(x,\omega)$.
\end{lemma}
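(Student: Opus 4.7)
The plan is to use the standard measure-theoretic ``machine'': first verify the identity for product-type indicator functions, extend by linearity to elementary functions, and conclude with a monotone class argument. Nothing beyond the pull-out property of conditional expectation, independence, and monotone convergence will be needed.

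First I would treat the base case $h(x,\omega) = \ind_B(x)\ind_C(\omega)$, where $B \subset \RR^d$ is Borel and $C$ lies in a sub-$\sigma$-algebra independent of $\F_s$. Then $H(x) = \ind_B(x)\PP(C)$. Since $\zeta$ is $\F_s$-measurable and $\ind_C$ is independent of $\F_s$, the pull-out and independence properties of conditional expectation give
\begin{equation*}
\EE\bigl[\ind_B(\zeta)\ind_C \mid \F_s\bigr] = \ind_B(\zeta)\,\EE[\ind_C \mid \F_s] = \ind_B(\zeta)\,\PP(C) = H(\zeta),
\end{equation*}
which is exactly the desired identity on this class.

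By linearity the identity then extends to finite linear combinations of such product indicators. I would then invoke a monotone class argument: the family ${\cal H}$ of bounded measurable $h$ for which $\EE[h(\zeta,\cdot) \mid \F_s] = H(\zeta)$ holds is closed under bounded monotone limits (using conditional monotone convergence on one side and ordinary monotone convergence in the definition of $H$ on the other), contains the constants, and contains the product indicators already handled, so it exhausts all bounded measurable functions of $(x,\omega)$ with $h(x,\cdot)$ independent of $\F_s$. An equivalent and possibly cleaner route is Fubini: for any $A \in \F_s$,
\begin{equation*}
\EE\bigl[h(\zeta,\omega)\ind_A\bigr] = \int_A \int h(\zeta(\omega'),\omega'')\,\PP(d\omega'')\,\PP(d\omega') = \int_A H(\zeta(\omega'))\,\PP(d\omega') = \EE\bigl[H(\zeta)\ind_A\bigr],
\end{equation*}
where the independence of $h(\cdot,\omega)$ from $\F_s$ is precisely what lets one split the joint law into a product measure.

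The main subtlety, as with all such ``freezing'' lemmas, is to give precise meaning to the independence of the random function $h(x,\omega)$ from $\F_s$, namely that the $\sigma$-algebra $\sigma(h(x,\cdot) : x \in \RR^d)$ is independent of $\F_s$, together with enough joint measurability of $(x,\omega) \mapsto h(x,\omega)$ for Fubini to apply and for $H$ to be well defined and Borel measurable. Once those hypotheses are pinned down, the passage from product indicators to all bounded measurable $h$ is routine.
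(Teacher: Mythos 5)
Your proof is correct: the product-indicator base case, linearity, and the monotone class (or Fubini) extension constitute the standard proof of this ``freezing'' lemma, and you rightly flag the only real subtlety, namely making precise the independence of the random function $h(x,\cdot)$ from $\F_s$ and the joint measurability needed for $H$ and for Fubini. Note that the paper itself does not prove this statement at all --- it cites it as a classical result (Lemma 9.2 of Mao's book) --- and the argument you give is essentially the one found in that reference.
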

For any $x \in \RR^d$ and any Borel set $B \subset \RR^d$, define
\begin{equation*}
 \PP(x,B) := \PP (X_1 \in B \big\vert X_0 = x)~\text{and}~\PP_k(x,B) := \PP (X_k \in B \big\vert X_0 = x).
\end{equation*}

\begin{theorem}
 \label{hMp}
The solution generated by the ST method (\ref{STM}) is a homogeneous Markov process with transition probability kernel $\PP(x,B)$.
\end{theorem}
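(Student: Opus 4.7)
The plan is to verify two things: (i) the Markov property, namely $\PP(X_{k+1}\in B\mid\F_{t_k})=\PP(X_{k+1}\in B\mid X_k)$, and (ii) time-homogeneity, i.e.\ this common value equals $\PP(X_k,B)=\PP(X_1\in B\mid X_0=X_k)$. Part (ii) is essentially already delivered by Lemma \ref{pre2}, so the real work is (i), which I will handle by exploiting the closed-form representation \eqref{STM2} from Lemma \ref{pre1}.

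The first step is to write, using \eqref{STM2},
\begin{equation*}
X_{k+1}=G^{-1}\bigl(X_k+(1-\theta)f(X_k)h+g(X_k)\D B_k\bigr),
\end{equation*}
and then define the auxiliary function
\begin{equation*}
\varphi(x,\omega):=\ind_{B}\!\Bigl(G^{-1}\bigl(x+(1-\theta)f(x)h+g(x)\D B_k(\omega)\bigr)\Bigr).
\end{equation*}
Because $\D B_k=B(t_{k+1})-B(t_k)$ is independent of $\F_{t_k}$, the random function $\varphi(\cdot,\omega)$ is independent of $\F_{t_k}$; it is also bounded (by $1$) and measurable. Moreover $X_k$ is $\F_{t_k}$-measurable. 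Thus Lemma \ref{clas} applies with $\zeta=X_k$ and yields
\begin{equation*}
\EE\bigl(\ind_B(X_{k+1})\bigm|\F_{t_k}\bigr)=\EE\bigl(\varphi(X_k,\omega)\bigm|\F_{t_k}\bigr)=\Phi(X_k),
\end{equation*}
where $\Phi(x):=\EE\,\varphi(x,\omega)=\PP(X_1\in B\mid X_0=x)=\PP(x,B)$, the last equality being the definition of $\PP(x,B)$ together with the fact that $X_1$ under $X_0=x$ has exactly the law described by $\varphi(x,\cdot)$ (since $\D B_0$ and $\D B_k$ are equal in law). This simultaneously proves the Markov property and identifies the one-step transition kernel as $\PP(x,B)$, independently of $k$; hence the process is homogeneous.

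The only mild subtlety, and what I would double-check most carefully, is that replacing $\D B_k$ by $\D B_0$ in the definition of $\Phi$ is legitimate — but this is precisely the content of Lemma \ref{pre2}, which gives $\PP(X_{k+1}\in B\mid X_k=x)=\PP(X_1\in B\mid X_0=x)$ and thereby turns the kernel $\Phi$ into the desired $\PP(x,B)$. Finally, the Markov property at a single step iterates via the tower property to the usual multistep statement $\PP(X_{k+n}\in B\mid\F_{t_k})=\PP_n(X_k,B)$, so the process is a time-homogeneous Markov chain in the sense claimed.
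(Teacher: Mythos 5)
Your proposal is correct and follows essentially the same route as the paper: both use the representation $X_{k+1}=G^{-1}(X_k+(1-\theta)f(X_k)h+g(X_k)\D B_k)$ from Lemma \ref{pre1}, apply Lemma \ref{clas} to the bounded random function $\ind_B(G^{-1}(x+(1-\theta)f(x)h+g(x)\D B_k))$ with $\zeta=X_k$ to obtain the Markov property, and invoke Lemma \ref{pre2} for homogeneity. Your explicit remark about replacing $\D B_k$ by $\D B_0$ and the tower-property iteration are minor elaborations of the same argument.
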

\begin{proof}
 The homogeneous property follows Lemma \ref{pre2}, so we only need to show the Markov property. Define
\begin{equation*}
 Y_{k+1}^x = G^{-1} (x + (1 - \theta)f(x)h + g(x) \D B_k),
\end{equation*}
for $x \in \RR^d$ and $k \geq 0$. By (\ref{STM2}) we know that $X_{k+1} = Y_{k+1}^{X_k}$. Let $\G_{t_{k+1}} = \sigma \{ B(t_{k+1}) - B(t_k) \}$.
Clearly, $\G_{t_{k+1}}$ is independent of $\F_{t_k}$. Moreover, $Y_{k+1}^x$ depends completely on the increment $B(t_{k+1}) - B(t_k)$, so is $\G_{t_{k+1}}$-measurable.
Hence, $Y_{k+1}^x$ is independent of $\F_{t_k}$. Applying Lemma \ref{clas} with $h(x,\omega) = I_B(Y_{k+1}^x)$, we compute that
\begin{align*}
 &\PP (X_{k+1} \in B \big\vert \F_{t_k}) = \EE(I_B(X_{k+1}) \big\vert \F_{t_k}) = \EE \left( I_B(Y_{k+1}^{X_k}) \big\vert \F_{t_k} \right)
= \EE \left( I_B(Y_{k+1}^x)\right) \big\vert_{x=X_k} \\
&~= \PP(x,B) \big\vert_{x=X_k} = \PP(X_k,B) = \PP (X_{k+1} \in B \big\vert X_k).
\end{align*}
The proof is complete.   \eproof
\end{proof}
Therefore, we see that $\PP(\cdot,\cdot)$ is the one-step transition probability and $\PP_k(\cdot,\cdot)$ is the $k$-step transition probability, both of which are induced by the BEM solution.
\par
We state a simple version of the discrete-type Gronwall inequality in the next Lemma (see, for example, \cite{M1991a}).
\begin{lemma}\label{disGronine}
Let $\{u_n\}$ and $\{w_n\}$ be nonnegative sequences, and $\alpha$ be a nonnegative constant. If
\begin{equation*}
u_n \leq \alpha + \sum_{k=0}^{n-1} u_k w_k~~~\text{for}~n\geq 0,
\end{equation*}
then
\begin{equation*}
u_n \leq \alpha \exp\left(\sum_{k=0}^{n-1}w_k\right).
\end{equation*}
\end{lemma}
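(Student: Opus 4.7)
The natural plan is to introduce the majorant sequence
\[
v_n := \alpha + \sum_{k=0}^{n-1} u_k w_k, \qquad n\ge 0,
\]
so that by hypothesis $u_n\le v_n$ for every $n$, and $v_0=\alpha$. This converts the integral-style bound on $u_n$ into a one-step recursion on $v_n$: from the definition we have $v_{n+1}-v_n = u_n w_n$, and replacing $u_n$ by the upper bound $v_n$ (legitimate because $w_n\ge 0$) gives the multiplicative inequality
\[
v_{n+1} \le v_n(1+w_n).
\]
This is the real engine of the proof; the rest is bookkeeping.

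Next, I would iterate this recursion. Since $v_0=\alpha$ and $1+w_k\ge 0$, induction on $n$ yields
\[
v_n \le \alpha \prod_{k=0}^{n-1}(1+w_k).
\]
The base case $n=0$ is trivial (empty product equals $1$, empty sum in the hypothesis gives $u_0\le\alpha$), and the inductive step is a single application of the recursion above.

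Finally, applying the elementary inequality $1+x\le e^x$, valid for all real $x$ and in particular for $x=w_k\ge 0$, we convert the product into an exponential:
\[
\prod_{k=0}^{n-1}(1+w_k) \le \prod_{k=0}^{n-1} e^{w_k} = \exp\!\Bigl(\sum_{k=0}^{n-1} w_k\Bigr).
\]
Combining with $u_n\le v_n$ gives the desired bound. I do not anticipate any real obstacle here; this is a standard discrete Gronwall argument. The only points requiring a moment of care are that nonnegativity of $w_n$ is what allows us to pass from $u_n\le v_n$ to $u_n w_n\le v_n w_n$, and that the empty-sum/empty-product conventions at $n=0$ match on both sides.
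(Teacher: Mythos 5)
Your proof is correct and is the standard argument for the discrete Gronwall inequality: majorize by $v_n$, derive the one-step recursion $v_{n+1}\le v_n(1+w_n)$ using nonnegativity of $w_n$, iterate, and apply $1+x\le e^x$. The paper itself gives no proof of this lemma --- it is stated as a classical result with a citation to \cite{M1991a} --- so there is nothing to compare against; your argument is exactly the one found in the cited references.
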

Denote the family of all probability measures on $\RR^d$ by $\P(\RR^d)$. Define by $\LL$ the family of mappings $F: \RR^d \rightarrow \RR$ satisfying
\begin{equation*}
 |F(x) - F(y)| \leq |x-y| ~~~ \text{and} ~~~ |F(x)| \leq 1,
\end{equation*}
for any $x,y \in \RR^d$. For $\PP_1,\PP_2 \in \P(\RR^d)$, define metric $d_{\LL}$ by
\begin{equation*}
 d_{\LL}(\PP_1,\PP_2) = \sup_{F \in \LL} \left| \int_{\RR^d} F(x)\PP_1(dx) -  \int_{\RR^d} F(x)\PP_2(dx) \right|.
\end{equation*}
The weak convergence of probability measures can be illustrated in terms of metric $d_{\LL}$ \cite{IW1981a}. That is,
a sequence of probability measures $\{\PP_k\}_{k \geq 1}$ in $\P(\RR^d)$ converge weakly to a probability measure $\PP \in \P(\RR^d)$ if and only if
\begin{equation*}
 \lim_{k \rightarrow \infty} d_{\LL}(\PP_k,\PP) = 0.
\end{equation*}
Then we define the stationary distribution for $\{X_k\}_{k \geq 0}$ by using the concept of weak convergence.
\begin{defn}
 For any initial value $x \in \RR^d$ and a given step size $\D t > 0$, $\{X_k\}_{k \geq 0}$ is said to have a stationary distribution $\Pi_{\D t} \in \P(\RR^d)$ if
the $k$-step transition probability measure $\PP_k(x,\cdot)$ converges weakly to $\Pi_{\D t}(\cdot)$ as $k \rightarrow \infty$ for every $x \in \RR^d$, that is
\begin{equation*}
 \lim_{k \rightarrow \infty} \left( \sup_{F \in \LL} \left| \EE (F(X_k)) - E_{\Pi_{\D t}} (F) \right| \right) = 0,
\end{equation*}
where
\begin{equation*}
 E_{\Pi_{\D t}} (F) = \int_{\RR^d} F(y) \Pi_{\D t}(dy).
\end{equation*}
\end{defn}
\par \noindent
In \cite{YM2005a}, the authors presented a very general theory, Theorem 3.1, on the existence and uniqueness of the stationary distribution for any one step numerical methods. We adapt it here and state the theory for the stochastic theta method as follows.
\begin{theorem}
\label{mainthm}
Assume that the following three requirements are fulfilled.
\begin{itemize}
\item
For any $\e >0$ and $x_0 \in \RR^d$, there exists a constant $R = R(\e,x_0) > 0$ such that
\begin{equation}\label{ass1}
 \PP (|X_k^{x_0}| \geq R) < \e,~~~\text{for any}~k \geq 0.
\end{equation}

\item
 For any $\e >0$ and any compact subset $K$ of $\RR^d$, there exists a positive integer $k^* = k^*(\e,K)$ such that
\begin{equation}\label{ass2}
 \PP(|X_k^{x_0} - X_k^{y_0}|<\e) \geq 1 - \e,~~~\text{for any}~ k \geq k^* ~\text{and any}~(x_0,y_0) \in K \K K.
\end{equation}

\item
 For any $\e >0$, $n \geq 1$ and any compact subset $K$ of $\RR^d$, there exists a $R = R(\e,n,K) > 0$ such that
\begin{equation}\label{ass3}
 \PP \left(\sup_{0 \leq k \leq n} |X_k^{x_0}| \leq R\right) > 1 - \e, ~~~ \text{for any}~x_0 \in K.
\end{equation}
\end{itemize}
Then the numerical solution generated by the stochastic theta method $\{X_k\}_{k \geq 0}$ has a unique stationary distribution $\Pi_{\D t}$.
\end{theorem}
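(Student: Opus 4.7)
The plan is to establish, for each initial point $x_0 \in \RR^d$, that the sequence of $k$-step transition measures $\{\PP_k(x_0, \cdot)\}_{k \geq 1}$ is Cauchy in the bounded-Lipschitz metric $d_\LL$ and that its limit is independent of $x_0$. Given the definition adopted in the paper, this simultaneously yields existence and uniqueness of $\Pi_{\D t}$: uniqueness, because any stationary distribution must coincide with such a weak limit for every starting point.

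First I would exploit Condition (\ref{ass1}): it is exactly the statement that the family $\{\PP_k(x_0,\cdot) : k \geq 0\}$ is tight, so by Prokhorov's theorem it is weakly relatively compact. Second, for every $F \in \LL$ one has the pointwise bound $|F(u)-F(v)| \leq \min(|u-v|,2)$; combining this with Condition (\ref{ass2}), for any compact $K \subset \RR^d$ and $\eps > 0$ there is $k^* = k^*(\eps, K)$ such that
\begin{equation*}
|\EE F(X_k^x) - \EE F(X_k^y)| \leq \eps + 2\,\PP(|X_k^x - X_k^y| \geq \eps) \leq 3\eps
\end{equation*}
uniformly in $(x,y) \in K \K K$ and $k \geq k^*$; equivalently, $d_\LL(\PP_k(x,\cdot), \PP_k(y,\cdot)) \to 0$ uniformly on $K \K K$.

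The crux is the Cauchy property, which relies on the Chapman--Kolmogorov identity $\EE F(X_{n+m}^{x_0}) = \int \EE F(X_m^y)\, \PP_n(x_0,dy)$ supplied by the homogeneous Markov property established in Theorem \ref{hMp}. I would choose $K_\eps \ni x_0$ from (\ref{ass1}) so that $\PP_n(x_0,K_\eps^c) < \eps$ for every $n \geq 0$, and then $k^* = k^*(\eps,K_\eps)$ from (\ref{ass2}). Subtracting the trivial identity $\EE F(X_m^{x_0}) = \int \EE F(X_m^{x_0})\, \PP_n(x_0,dy)$ and splitting the integration over $K_\eps$ and $K_\eps^c$ gives
\begin{equation*}
|\EE F(X_{n+m}^{x_0}) - \EE F(X_m^{x_0})| \leq \sup_{y \in K_\eps}|\EE F(X_m^y) - \EE F(X_m^{x_0})| + 2\eps \leq 5\eps
\end{equation*}
for all $m \geq k^*$, $n \geq 0$ and $F \in \LL$. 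Hence $\{\PP_k(x_0, \cdot)\}$ is $d_\LL$-Cauchy, and together with the tightness from the first step it converges weakly to some $\Pi^{x_0} \in \P(\RR^d)$. Applying the contraction estimate of the second step directly to two starting points $x_0, y_0$ forces $\Pi^{x_0} = \Pi^{y_0} =: \Pi_{\D t}$, giving existence; uniqueness is automatic from the definition.

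The main obstacle is orchestrating the compact-set decomposition in the Chapman--Kolmogorov step so that (\ref{ass1}) controls the mass outside $K_\eps$ uniformly in $n$ while the contraction estimate from (\ref{ass2}) can be applied on $K_\eps$ with a single choice of $k^*$ depending only on $\eps$. Condition (\ref{ass3}) enters as the finite-horizon path bound needed to justify the intermediate conditioning and to guarantee that the iterated Markov transitions behave tamely on any compact initial set; quantitatively it is harmless, but structurally it is what legitimises the splitting argument and the passage to the limit in the Chapman--Kolmogorov identity.
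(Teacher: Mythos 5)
Your argument is correct and is essentially the classical proof of this result: the paper itself supplies no proof, deferring to Theorem 3.1 of Yuan and Mao \cite{YM2005a}, and your route --- tightness of $\{\PP_k(x_0,\cdot)\}$ from \eqref{ass1}, the uniform-on-compacts estimate $d_{\LL}(\PP_k(x,\cdot),\PP_k(y,\cdot))\le 3\e$ for $k\ge k^*$ from \eqref{ass2}, the Chapman--Kolmogorov splitting over $K_\e$ and $K_\e^{c}$ to obtain the Cauchy property in $d_{\LL}$, and the observation that the limit is independent of the initial point --- is exactly the argument used there. One correction to your closing paragraph: condition \eqref{ass3} does not ``legitimise the splitting argument or the passage to the limit''; your proof as written never actually uses it, and under the paper's definition of stationary distribution the existence and uniqueness follow from \eqref{ass1}, \eqref{ass2} and the homogeneous Markov property (Theorem \ref{hMp}, which also supplies the measurability of $y\mapsto\EE F(X_m^{y})$ needed to make sense of $\int \EE F(X_m^{y})\,\PP_n(x_0,dy)$) alone, so you should either delete that remark or state plainly that \eqref{ass3} is not invoked in this particular theorem.
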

\begin{rmk}
Although the theory is very general, the conditions in it are in the sense of probability which are not easy to check. In this paper, we give some coefficients related conditions, i.e Conditions \ref{ffgg} to \ref{ligrfg}, and prove the existence and uniqueness of the stationary distribution of the solution generated by the ST method under those conditions.
\end{rmk}

\section{Main Results}\label{secmain}

In this section, we present the main results of this paper. Since different choices of the parameter $\theta$ in (\ref{STM}) require different requirements on the coefficients, $f$ and $g$, we divide this section into three parts. We discuss the case when $\theta \in [0,1/2)$ in Section \ref{thetalessthanahalf} and the situation when $\theta \in [1/2,1]$ is presented in Section \ref{thetabiggerthanahalf}. The convergence of the numerical stationary distribution to the underlying counterpart is discussed in Section \ref{theconvergencesec}.
\par

\subsection{$\theta \in [0,1/2)$} \label{thetalessthanahalf}

\begin{lemma}
\label{lemma11}
 Assume Conditions \ref{xf} and \ref{ligrfg} hold, then for
 $h<-(2\mu+\sigma)/(1-\theta)^2\kappa$, the solution generated by the ST method \eqref{STM} obeys
\end{lemma}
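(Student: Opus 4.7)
The statement surely bounds the second moment of $\{X_k\}$ uniformly in $k$, i.e.\ something of the form $\sup_{k\geq 0}\EE|X_k|^2\leq C$ for a constant $C$ depending on $x_0$, $\mu$, $\sigma$, $\kappa$, $a$, $b$, $c$, $\theta$ and $h$. Such a bound is exactly what is needed to verify the tightness requirement \eqref{ass1} in Theorem \ref{mainthm}, so this is the natural content of Lemma \ref{lemma11}. My plan is to derive a one-step recursion $\EE|X_{k+1}|^2\leq \alpha\,\EE|X_k|^2+\beta$ with $\alpha\in(0,1)$ exactly when $h<-(2\mu+\sigma)/((1-\theta)^2\kappa)$, and then iterate.

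First I would rewrite \eqref{STM} in the implicit form
\begin{equation*}
X_{k+1}-\theta h f(X_{k+1}) = X_k + (1-\theta)h f(X_k) + g(X_k)\Delta B_k,
\end{equation*}
take squared Euclidean norms on both sides and condition on $\F_{t_k}$. Using $\EE[\Delta B_k\mid\F_{t_k}]=0$ and $\EE[|\Delta B_k|^2\mid\F_{t_k}]=h$, the cross term involving $g(X_k)\Delta B_k$ vanishes and $|g(X_k)\Delta B_k|^2$ contributes $|g(X_k)|^2 h$. On the left-hand side the cross term $-2\theta h\langle X_{k+1},f(X_{k+1})\rangle$ is bounded below via Condition \ref{xf} by $-2\theta h(\mu|X_{k+1}|^2+a)$, while the quadratic $\theta^2 h^2|f(X_{k+1})|^2$ is nonnegative and can simply be dropped. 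On the right-hand side Conditions \ref{xf} and \ref{ligrfg} give
\begin{equation*}
2(1-\theta)h\langle X_k,f(X_k)\rangle + (1-\theta)^2h^2|f(X_k)|^2 + h|g(X_k)|^2 \leq A_h|X_k|^2 + B_h,
\end{equation*}
where $A_h = 2(1-\theta)h\mu + (1-\theta)^2 h^2\kappa + h\sigma$ and $B_h$ collects the constants $a,b,c$.

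Putting these together and using $\mu<0$ (so $1-2\theta h\mu\geq 1$), I divide through by $1-2\theta h\mu$ to obtain
\begin{equation*}
\EE|X_{k+1}|^2 \leq \frac{1+A_h}{1-2\theta h\mu}\,\EE|X_k|^2 + \frac{B_h+2\theta h a}{1-2\theta h\mu}.
\end{equation*}
A short algebraic check shows that $\frac{1+A_h}{1-2\theta h\mu}<1$ is equivalent to $2\mu+(1-\theta)^2h\kappa+\sigma<0$, i.e.\ precisely $h<-(2\mu+\sigma)/((1-\theta)^2\kappa)$; here \eqref{ms} guarantees the right-hand side is positive so that the admissible range of $h$ is non-empty. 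Then iterating the geometric recursion yields a uniform bound of the form $\EE|X_k|^2\leq \alpha^k|x_0|^2 + \beta/(1-\alpha)$, which gives the desired $\sup_{k\geq 0}\EE|X_k|^2\leq C$.

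The main technical nuisance, rather than any deep obstacle, is keeping the signs straight when applying Condition \ref{xf} to $X_{k+1}$ on the left-hand side: because the implicitness produces $-2\theta h\langle X_{k+1},f(X_{k+1})\rangle$ with a minus sign and $\mu$ itself is negative, one must be careful that the resulting term has the correct direction to be absorbed into the $|X_{k+1}|^2$ coefficient. Once that is handled, the remainder is a one-step contraction plus Gronwall/geometric-sum iteration (Lemma \ref{disGronine} is more than enough), and the constant $C$ can be read off explicitly in terms of $\mu,\sigma,\kappa,a,b,c,\theta,h$ and $|x_0|^2$.
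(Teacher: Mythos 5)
Your proposal is correct and follows essentially the same route as the paper: both derive the one-step recursion $\EE|X_{k+1}|^2\leq A\,\EE|X_k|^2+B$ with $A=\bigl(1+(1-\theta)^2h^2\kappa+h\sigma+2(1-\theta)h\mu\bigr)/(1-2\theta h\mu)$, use Condition \ref{xf} on the implicit term to produce the $1-2\theta h\mu>1$ denominator, and check that $A<1$ is exactly the stated step-size restriction before iterating the geometric recursion. The only cosmetic difference is that you square the implicit relation directly and drop the nonnegative $\theta^2h^2|f(X_{k+1})|^2$, whereas the paper reaches the same inequality via $|X_{k+1}|^2=\langle X_{k+1},\cdot\rangle$ and Young's inequality.
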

 \begin{equation*}
 \EE|X_{k}|^{2}\leq C_{1},
 \end{equation*}
 where $C_{1}$ is a constant that does not rely on k.\\
\begin{proof}
Applying Conditions \ref{xf} and \ref{ligrfg}, we have
\begin{align*}
|X_{k+1}|^2=&\langle X_{k+1},X_{k}+(1-\theta)f(X_{k})h+g(X_{k})\Delta B_{k}\rangle+\theta h\langle X_{k+1},F(X_{k+1})\rangle \\
\leq & \frac{1}{2}|X_{k+1}|^{2}+\frac{1}{2}[X_{k}+(1-\theta)hf(X_{k})+g(X_{k})\Delta B_{K}]^{2}+\theta h \mu|X_{k+1}|^2+a\theta h\\
=&\frac{1}{2}|X_{k+1}|^2+\frac{1}{2}[|X_{k}|^2+(1-\theta)^2 f(X_{k})^2 h^2+g(X_{K})^{2} \Delta B_{K}^2+2(1-\theta)h X_{k}f(X_{k})]\\
&+\theta h\mu|X_{k+1}|^2+a\theta h+A_{1},
\end{align*}
where $A_{1}=\langle X_{k},g(X_{k})\rangle \Delta B_{k}+2(1-\theta)h\langle f(X_{k}),g(X_{k})\rangle\Delta B_{k}$.
Since $\mathbb{E}\Delta B_{k}=0$, we have $\mathbb{E}A_{1}=0$. By iteration, we have
\begin{align*}
\mathbb{E}|X_{k+1}|^2& \leq A_{2}\mathbb{E}|X_{k}|^2+A_{3}\\
&\leq A_{2}(A_{2}\mathbb{E}|X_{k-1}|^2+A_{3})+A_{3}\\
&\leq A_{2}^{k+1}\mathbb{E}|X_{0}|^2+A_{3}+A_{2}A_{3}+A_{2}^2A_{3}+\cdots+A_{2}^k A_{3},
\end{align*}
where
\begin{equation*}
A_{2}=\frac{1+(1-\theta)^2 h^2 \kappa+\Delta B_{k}^2 \sigma+2(1-\theta)h\mu}{1-2\mu\theta h},
\end{equation*}
and
\begin{equation*}
A_{3}=\frac{(1-\theta)^2 h^2c+bh+2(1-\theta)ha+2a\theta h}{1-2\mu\theta h}.
\end{equation*}
Due to the facts $h<-(2\mu+\sigma)/(1-\theta)^2\kappa$, $1-2\mu\theta h>0$ and $\kappa>0$ and \eqref{ms},we have $0<A_{2}<1$. This complete the proof.     \eproof
\end{proof}

\begin{lemma}
\label{lemma12}
 Let Condition \ref{ffgg} and \ref{xyff} hold. Then, for $h<-(2K_2+K_1)/(1-\theta)^2K_1$ and any two initial values $x,y\in\mathbb{R}^{d}$ with $x \neq y$ the solutions generated by the ST method \eqref{STM} satisfy
 \begin{equation*}
 \EE|X_{k}^x-X_{k}^y|^{2}\leq C_{3}\mathbb{E}|x-y|^2,
 \end{equation*}
 where
\begin{equation*}
C_3=\left[\frac{1+(1-\theta)^2h^2K_{1}+hK_{1}+2K_{2}(1-\theta)h}{1-2K_{2}\theta h}\right]^{k+1},
\end{equation*}
and $\lim\limits_{k\rightarrow+\infty}C_{3}=0$.
\end{lemma}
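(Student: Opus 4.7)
The plan is to mirror the proof of Lemma \ref{lemma11}, but applied to the difference process $Z_k := X_k^x - X_k^y$ rather than to $X_k$ itself. The key algebraic idea is that Condition \ref{xyff} controls the implicit $\theta$-term through a one-sided Lipschitz estimate, while Condition \ref{ffgg} controls the explicit drift and diffusion differences through genuine Lipschitz bounds. The step-size restriction $h<-(2K_2+K_1)/((1-\theta)^2K_1)$ is exactly what makes the contraction ratio strictly less than $1$.

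First I would write the ST identity \eqref{STM} for the two trajectories $X_{k+1}^x$ and $X_{k+1}^y$ and subtract to obtain
\begin{equation*}
Z_{k+1}-\theta h\bigl(f(X_{k+1}^x)-f(X_{k+1}^y)\bigr)=Z_k+(1-\theta)h\bigl(f(X_k^x)-f(X_k^y)\bigr)+\bigl(g(X_k^x)-g(X_k^y)\bigr)\Delta B_k.
\end{equation*}
Next I would compute $|Z_{k+1}|^2=\langle Z_{k+1},Z_{k+1}\rangle$ by taking the inner product of $Z_{k+1}$ with the right-hand side plus the $\theta h$ term, in direct analogy with the opening display of Lemma \ref{lemma11}. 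The term $\theta h\langle Z_{k+1},f(X_{k+1}^x)-f(X_{k+1}^y)\rangle$ is bounded by $\theta h K_2|Z_{k+1}|^2$ via Condition \ref{xyff}, and the other inner product is split by the elementary inequality $\langle a,b\rangle\leq\tfrac12|a|^2+\tfrac12|b|^2$ so that $\tfrac12|Z_{k+1}|^2$ can be absorbed on the left.

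The remaining task is to expand the square of $Z_k+(1-\theta)h\,\Delta f_k+\Delta g_k\Delta B_k$ (using shorthand for the differences), take expectations, and apply the bounds: the linear-in-$\Delta B_k$ cross terms vanish, the $|\Delta f_k|^2$ and $|\Delta g_k|^2$ terms are each at most $K_1|Z_k|^2$ by Condition \ref{ffgg}, the $\langle Z_k,\Delta f_k\rangle$ cross term is at most $K_2|Z_k|^2$ by Condition \ref{xyff}, and $\mathbb{E}\Delta B_k^2=h$. After rearrangement, I expect the one-step recursion
\begin{equation*}
(1-2K_2\theta h)\,\mathbb{E}|Z_{k+1}|^2\leq\bigl(1+(1-\theta)^2h^2K_1+hK_1+2(1-\theta)hK_2\bigr)\mathbb{E}|Z_k|^2,
\end{equation*}
which iterates to the stated bound with the ratio $C_3^{1/(k+1)}$ as the common factor.

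The main obstacle, if any, is purely bookkeeping: verifying that the prescribed step-size bound forces the contraction ratio into $(0,1)$. A short calculation reduces ``ratio $<1$'' to $(1-\theta)^2hK_1+K_1+2K_2<0$, which is precisely the hypothesis $h<-(2K_2+K_1)/((1-\theta)^2K_1)$ (recalling $2K_2+K_1<0$ from \eqref{k1k2} and $K_1>0$); positivity of the numerator for sufficiently small $h$, and positivity of the denominator $1-2K_2\theta h$ (immediate since $K_2<0$), complete the verification that $C_3\in(0,1)^{k+1}$ and hence $\lim_{k\to\infty}C_3=0$.
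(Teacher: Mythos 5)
Your proposal is correct and follows essentially the same route as the paper's own proof: the same splitting of $|X_{k+1}^x-X_{k+1}^y|^2$ into the explicit part (handled by $\langle a,b\rangle\le\tfrac12|a|^2+\tfrac12|b|^2$) plus the implicit $\theta h$ term (handled by Condition \ref{xyff}), the same vanishing of the $\Delta B_k$ cross terms in expectation, and the same one-step recursion with ratio $\bar{C}_3=C_3^{1/(k+1)}$, whose strict containment in $(0,1)$ reduces to the stated step-size bound exactly as you compute. Your explicit reduction of ``ratio $<1$'' to $(1-\theta)^2hK_1+K_1+2K_2<0$ is in fact slightly more careful than the paper, which simply asserts $0<\bar{C}_3<1$.
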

\begin{proof}
From \eqref{STM}, we have
\begin{align*}
|X_{k+1}^x-X_{k+1}^y|=&X_{k}^x-X_{k}^y+\theta[f(X_{k+1}^x)-f(X_{k+1}^y)]h+(1-\theta)[f(X_{k}^x)-f(X_{k}^y)]h\\
&+[g(X_{k}^x)-g(X_{k}^y)]\Delta B_{k}.
\end{align*}
Applying Conditions \ref{ffgg} and \ref{xyff}, we have
\begin{align*}
|X_{k+1}^x-X_{k+1}^y|^2=&\langle X_{k+1}^x-X_{k+1}^y,X_{k}^x-X_{k}^y+(1-\theta)[f(X_{k}^x)-f(X_{k}^y)]h\\
&+[g(X_{k}^x)-g(X_{k}^y)]\Delta B_{k}\rangle+\langle X_{k+1}^x-X_{k+1}^y,f(X_{k+1}^x)-f(X_{k+1}^y)\rangle\theta h\\
\leq&\frac{1}{2}|X_{k+1}^x-X_{k+1}^y|^2+\frac{1}{2}[X_{k}^x-X_{k}^y+(1-\theta)[f(X_{k}^x)-f(X_{k}^y)]h\\
&+[g(X_{k}^x)-g(X_{k}^y)]\Delta B_{k}]^2+\theta hK_{2}|X_{k+1}^x-X_{k+1}^y|^2.
\end{align*}
Then, we have
\begin{align*}
(\frac{1}{2}-K_{2}\theta h )|X_{k+1}^x-X_{k+1}^y|^2\leq &\frac{1}{2}|X_{k}^x-X_{k}^y|^2+\frac{1}{2}(1-\theta)^2[f(X_{k}^x)-f(X_{k}^y)]^2h^2\\
&+\frac{1}{2}[g(X_{k}^x)-g(X_{k}^y)]^2\Delta B_{k}^2\\
&+\langle X_{k}^x-X_{k}^y,f(X_{k}^x)-f(X_{k}^y)\rangle (1-\theta)h+Q_{1},
\end{align*}
where
\begin{equation*}
Q_{1}=\langle X_{k}^x-X_{k}^y,g(X_{k}^x)-g(X_{k}^y)\rangle\Delta B_{k}+(1-\theta)h\Delta B_{k}\langle f( X_{k}^x)-f(X_{k}^y),g(X_{k}^x)-g(X_{k}^y)\rangle.
\end{equation*}
It is not difficult to show that
\begin{align*}
|X_{k+1}^x-X_{k+1}^y|^2\leq&\frac{1+(1-\theta)^2h^2K_{1}+\Delta B_{k}^2K_{1}+2K_{2}(1-\theta)h}{1-2K_{2}\theta h}|X_{k}^x-X_{k}^y|^2+\frac{Q_1}{1-2K_{2}\theta h}.
\end{align*}
Since $\mathbb{E}\Delta B_{k}=0$, we have $\mathbb{E}Q_{1}=0$. Then, we obtain
\begin{align*}
\mathbb{E}|X_{k+1}^x-X_{k+1}^y|^2\leq \bar{C}_3\mathbb{E}|X_{k}^x-X_{k}^y|^2.
\end{align*}
By iteration, we have
\begin{equation*}
\mathbb{E}|X_{k+1}^x-X_{k+1}^y|^2\leq \bar{C}_3^{k+1}\mathbb{E}|x-y|^2,
\end{equation*}
where
\begin{equation*}
\bar{C}_3=\frac{1+(1-\theta)^2h^2K_{1}+hK_{1}+2K_{2}(1-\theta)h}{1-2K_{2}\theta h}.
\end{equation*}
Since $h<-(2K_2+K_1)/(1-\theta)^2K_1$, $1-2K_{2}\theta h>0$ and \eqref{k1k2}, we have
$0<\bar{C}_3<1$.
This complete the proof. \eproof
\end{proof}

\begin{lemma}
\label{lemma13}
Given Conditions \ref{xf} and \ref{ligrfg}, the solution generated by the ST method \eqref{STM} obeys
\begin{equation*}
\EE\left( \sup_{0 \leq k \leq n} |X_k|^2 \right)\leq C_{2},
\end{equation*}
where $C_{2}$ is a constant that can rely on k.
\end{lemma}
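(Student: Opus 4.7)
The plan is to reduce the supremum estimate to the uniform-in-$k$ second moment bound of Lemma \ref{lemma11}. Since $|X_k|^2$ is nonnegative, the pathwise inequality
\begin{equation*}
\sup_{0 \leq k \leq n}|X_k|^2 \;\leq\; \sum_{k=0}^{n}|X_k|^2
\end{equation*}
holds, so taking expectation (Tonelli) and applying Lemma \ref{lemma11}, which under Conditions \ref{xf}--\ref{ligrfg} together with the implicit step-size restriction $h<-(2\mu+\sigma)/((1-\theta)^2\kappa)$ gives $\EE|X_k|^2\leq C_1$ for every $k\geq 0$, we immediately obtain
\begin{equation*}
\EE\!\left(\sup_{0 \leq k \leq n}|X_k|^2\right) \;\leq\; \sum_{k=0}^{n}\EE|X_k|^2 \;\leq\; (n+1)C_1 \;=:\; C_2,
\end{equation*}
and $C_2$ depends on $n$ as the statement allows.

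If instead one wishes to avoid the step-size restriction and prove the bound under Conditions \ref{xf} and \ref{ligrfg} alone, I would work directly with the one-step inequality derived inside the proof of Lemma \ref{lemma11}, namely
\begin{equation*}
|X_{k+1}|^2 \leq A_2\,|X_k|^2 + A_3 + A_1,
\end{equation*}
where $A_1$ is the Brownian noise contribution satisfying $\EE(A_1\mid\F_{t_k})=0$. Splitting $\Delta B_k^2 = h + (\Delta B_k^2 - h)$ replaces the stochastic coefficient $A_2$ by a deterministic constant $\bar A_2$ plus a mean-zero perturbation $\frac{\sigma(\Delta B_k^2-h)}{1-2\mu\theta h}|X_k|^2$; absorbing this into the noise gives an a.s.\ recursion
\begin{equation*}
|X_{k+1}|^2 \leq \bar A_2\,|X_k|^2 + \bar A_3 + N_k,\qquad \EE(N_k\mid\F_{t_k})=0.
\end{equation*}
Iterating and taking $\sup_{0\leq k\leq n}$, the deterministic part contributes $\bar A_2^n|X_0|^2 + \bar A_3\sum_{j=0}^{n-1}\bar A_2^{j}$, while the noise term is controlled pathwise by $\sum_{j=0}^{n-1}\bar A_2^{n-1-j}|N_j|$; taking expectation and applying Cauchy--Schwarz together with the pointwise bound $\EE|X_j|^2<\infty$ (from the iterated deterministic recursion alone) shows each $\EE|N_j|$ is finite, so the resulting $C_2$ is a finite constant depending on $n$.

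The main obstacle is conceptual rather than technical: because $C_2$ is permitted to depend on $n$, the result is essentially a corollary of any uniform or finite-horizon moment bound, and the short proof above via Lemma \ref{lemma11} is what I would actually write down. The only subtlety worth flagging is that if one wants to move the supremum past the expectation in the refined argument, the random quadratic factor $\Delta B_k^2$ attached to $|X_k|^2$ in the original one-step inequality must first be decomposed into its mean $h$ and a mean-zero remainder; once this is done, the $\sup\leq\sum$ trick applied to the noise sum is routine and the estimate closes using independence of $\Delta B_k$ from $\F_{t_k}$.
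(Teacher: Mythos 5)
Your primary argument is correct and is genuinely different from (and much shorter than) the paper's proof. The paper does not invoke Lemma \ref{lemma11} at all: it rederives a one-step inequality $|X_{k+1}|^2\le D_1|X_k|^2+D_2|g(X_k)|^2|\D B_k|^2+D_3$, sums it over $k$, takes the supremum and expectation, and closes the estimate with the discrete Gronwall inequality (Lemma \ref{disGronine}), producing a constant growing like $\exp((n+1)(D_1-1+D_2h^2\sigma))$. You instead observe that for nonnegative terms $\sup_{0\le k\le n}|X_k|^2\le\sum_{k=0}^n|X_k|^2$, so the claim follows from the uniform bound $\EE|X_k|^2\le C_1$ of Lemma \ref{lemma11} with $C_2=(n+1)C_1$. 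Since the lemma explicitly allows $C_2$ to depend on the horizon, this is a complete proof; it is also arguably cleaner than the paper's, whose Gronwall step is written in a way that sums the ($k$-independent) quantity $\EE\sup_{0\le k\le n}|X_k|^2$ over $i$, which only makes sense after some repair. The one caveat, which you correctly flag, is that your short route inherits the step-size restriction $h<-(2\mu+\sigma)/((1-\theta)^2\kappa)$ from Lemma \ref{lemma11}, whereas the statement of Lemma \ref{lemma13} (and the paper's proof) does not impose it; this is harmless in context, since Lemmas \ref{lemma11}--\ref{lemma13} are only ever applied jointly, and your fallback argument via the one-step recursion (with the decomposition $\D B_k^2=h+(\D B_k^2-h)$ and independence of $\D B_k$ from $\F_{t_k}$) removes the restriction if one insists on the literal statement. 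What the paper's route buys is independence from Lemma \ref{lemma11}; what yours buys is brevity and a transparent constant.
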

\begin{proof}
From \eqref{STM}, we have
\begin{align*}
|X_{k+1}|^{2}=&\langle X_{k}+(1-\theta) f(X_{k})h+g(X_{k})\Delta B_{k},X_{k+1} \rangle+\langle \theta f(X_{k})h, X_{k+1}\rangle\\
\leq&\frac{1}{2}|X_{k}+(1-\theta) f(X_{k})h +g(X_{k})\Delta B_{k}|^2 + \frac{1}{2}|X_{k+1}|^2 + \theta h(\mu|X_{k+1}|^2+a).
\end{align*}
Applying Conditions \ref{xf} and \ref{ligrfg}, we get
\begin{align*}
(1-\frac{1}{2}-\theta h \mu)|X_{k+1}|^2\leq &\frac{1}{2}[|X_{k}|^2+(1-\theta)^2 h^2 |f(X_{k})|^2+|g(X_{k})|^2|\Delta B_{k}|^2\\
&+ 2 h (1-\theta)\langle X_{k},f(X_{k})\rangle+2 \langle X_{k},g(X_{k})\rangle \Delta B_{k}\\
&+2 (1-\theta) h \langle f(X_{k}),g(X_{k})\rangle \Delta B_{k}]+\theta h a\\
\leq&\frac{1}{2}[|X_{k}|^2+(1-\theta)^2 h^2 (\kappa |X_{k}|^2+c)+|g(X_{k})|^2|\Delta B_{k}|^2\\
&+2 h(1-\theta)(\mu|X_{k}|^2 +a)+|X_{k}|^2+|g(X_{k})|^2|\Delta B_{k}|^2\\
&+\Delta(1-\theta)(|f(X_{k})|^2+|g(X_{k})|^2|\Delta B_{k}|^2)]+\theta h a\\
\leq&\frac{1}{2}[(2+(1-\theta)^2 h^2\kappa+2h(1-\theta)\mu+h(1-\theta)\kappa)|X_{k}|^2\\
&+(2+h(1-\theta))|g(X_{k})|^2|\Delta B_{k}|^2]+\frac{1}{2}h(1-\theta)c\\
&+\frac{1}{2}ch^2(1-\theta)^2+ah(1-\theta)+\theta h a.
\end{align*}
That is
\begin{align*}
|X_{k+1}|^2\leq D_{1}|X_{k}|^2+D_{2}|g(X_{k})|^2|\Delta B_{k}|^2+D_{3},
\end{align*}
where
\begin{align*}
D_{1}=&[1+\frac{1}{2}\kappa h^2(1-\theta)^2+h\mu(1-\theta)+\frac{1}{2}\kappa h(1-\theta)]/(\frac{1}{2}-h\mu\theta),\\
D_{2}=&[1+\frac{1}{2} h (1-\theta)]/(\frac{1}{2}-h \mu \theta),\\
D_{3}=&[\frac{1}{2}h(1-\theta)c+\frac{1}{2}ch^2(1-\theta)^2+ah(1-\theta)+ah\theta ]/(\frac{1}{2}-h\mu\theta).
\end{align*}
Summarizing both sides yields
\begin{align*}
\sum_{i=1}^{k+1}|X_{i}|^2=D_{1}\sum_{i=0}^{k}|X_{i}|^2+D_{2}\sum_{i=0}^{k}|g(X_{i})|^2|\Delta B_{k}|^2+(k+1)D_{3}.
\end{align*}
Now we have
\begin{align*}
|X_{k+1}|^2=(D_{1}-1)\sum_{i=0}^{k}|X_{i}|^2+|X_{0}|^2+D_{2}\sum_{i=0}^{k}|g(X_{i})|^2|\Delta B_{k}|^2+(k+1)D_{3}.
\end{align*}
Taking the supreme and expectation on both sides gives
\begin{align*}
\EE \left( \sup_{0 \leq k \leq n} |X_k|^2 \right)
\leq&(D_{1}-1)\sum_{i=0}^{k}\EE \left( \sup_{0 \leq k \leq n} |X_{i}|^2 \right)\\
&+D_{2}\EE \left( \sup_{0 \leq k \leq n}(\sum_{i=0}^{k}|g(X_{i})|^2|\Delta B_{k}|^2)\right)+(k+1)D_{3}+|X_{0}|^2.
\end{align*}
Then we have
\begin{align*}
\EE \left( \sup_{0 \leq k \leq n} |X_k|^2 \right)
\leq&(D_{1}-1+D_{2}h^2\sigma)\sum_{i=0}^{k}\EE \left( \sup_{0 \leq k \leq n} |X_k|^2 \right)\\
&+D_{2}h^2kb+(k+1)D_{3}+|X_{0}|^2,
\end{align*}
where $\EE |\D B_k|^2 = h$ is used. Using the discrete version of the Gronwall inequality, Lemma \ref{disGronine}, we have
\begin{equation*}
\EE \left( \sup_{0 \leq k \leq n} |X_k|^2 \right) \leq (D_{2}h^2kb+(k+1)D_{3}+|X_{0}|^2)\exp ((k+1)(D_{1}-1+D_{2}h^2\sigma)).
\end{equation*}
The proof is complete.    \eproof
\end{proof}
\par \noindent
Combining Lemmas \ref{lemma11}, \ref{lemma12} and \ref{lemma13} and using Chebyshev's inequality, we derive the existence and uniqueness of the stationary distribution of the ST method with $\theta \in [0, 1/2)$ from Theorem \ref{mainthm}.

\subsection{$\theta \in [1/2,1]$} \label{thetabiggerthanahalf}

When $\theta \in [1/2,1]$, we do not need the part for $f(x)$ in Condition \ref{ffgg} but only need that
\begin{equation}
\label{gggg}
 |g(x)-g(y)|^2\leq K_1 |x-y|^2,
\end{equation}
for any $x,y \in \RR^d$.

To prove Lemmas \ref{lemma21} and \ref{lemma22}, let us present the following two lemmas and we refer the readers to \cite{WWL2008} for the proof.
\begin{lemma}
\label{forlemma21}
Let Condition \ref{xf} hold, then for any $\beta_1$, $\beta_2\in\mathbb{R}$ with $\beta_2\geq\beta_1\geq0$, the inequality
\begin{equation*}
|x-\beta_1f(x)|^2+2\beta_1a\leq\frac{1-\mu\beta_1}{1-\mu\beta_2}(|x-\beta_2f(x)|^2+2\beta_2a)
\end{equation*}
holds.
\end{lemma}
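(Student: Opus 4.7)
The plan is to transform the stated inequality into an equivalent polynomial inequality in $\beta_1, \beta_2$ and then extract a single common factor $(\beta_2-\beta_1)$ so that Condition \ref{xf} can be applied directly to the remaining bracket.

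First I would introduce the auxiliary function $\phi(\beta) := |x-\beta f(x)|^2 + 2\beta a$ and observe that because $\mu<0$ and $\beta_1,\beta_2\geq 0$, both denominators $1-\mu\beta_1$ and $1-\mu\beta_2$ are strictly positive. So the target inequality is equivalent to
\begin{equation*}
(1-\mu\beta_2)\,\phi(\beta_1)\;-\;(1-\mu\beta_1)\,\phi(\beta_2)\;\leq\;0.
\end{equation*}
Expanding $\phi(\beta_i)=|x|^2-2\beta_i\langle x,f(x)\rangle+\beta_i^2|f(x)|^2+2\beta_i a$ and collecting like terms, the coefficient of $|x|^2$ becomes $\mu(\beta_1-\beta_2)$, the coefficient of $\langle x,f(x)\rangle$ becomes $2(\beta_2-\beta_1)$, the coefficient of $|f(x)|^2$ becomes $-(\beta_2-\beta_1)(\beta_1+\beta_2-\mu\beta_1\beta_2)$, and the constant term becomes $-2(\beta_2-\beta_1)a$. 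Every term then carries a factor $(\beta_2-\beta_1)\geq 0$, which I would pull out.

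Once factored, the remaining bracket is
\begin{equation*}
-\mu|x|^2+2\langle x,f(x)\rangle-(\beta_1+\beta_2-\mu\beta_1\beta_2)|f(x)|^2-2a.
\end{equation*}
Here Condition \ref{xf} delivers $2\langle x,f(x)\rangle\leq 2\mu|x|^2+2a$, which reduces the bracket to at most $\mu|x|^2-(\beta_1+\beta_2-\mu\beta_1\beta_2)|f(x)|^2$. Since $\mu<0$ and, for $\beta_1,\beta_2\geq 0$, the quantity $\beta_1+\beta_2-\mu\beta_1\beta_2$ is nonnegative, both terms are $\leq 0$, so the bracket is nonpositive. Multiplying by the nonnegative factor $(\beta_2-\beta_1)$ gives the required sign and completes the proof.

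There is no genuine obstacle here; the only delicate point is the algebraic rearrangement, in particular verifying that the $(\beta_2-\beta_1)$ factor is common to all four terms. Making sure the signs of $\mu$, of $1-\mu\beta_i$, and of $\beta_1+\beta_2-\mu\beta_1\beta_2$ are tracked correctly (using $\mu<0$ and $\beta_i\geq 0$) is the one place where a slip would ruin the conclusion.
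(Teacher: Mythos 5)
Your proof is correct, and the algebra checks out: the difference $(1-\mu\beta_2)\phi(\beta_1)-(1-\mu\beta_1)\phi(\beta_2)$ does factor as $(\beta_2-\beta_1)\bigl[-\mu|x|^2+2\langle x,f(x)\rangle-(\beta_1+\beta_2-\mu\beta_1\beta_2)|f(x)|^2-2a\bigr]$, the condition kills the $2a$ and leaves $\mu|x|^2-(\beta_1+\beta_2-\mu\beta_1\beta_2)|f(x)|^2\leq 0$, and the positivity of $1-\mu\beta_i$ justifies the division. Note that the paper does not prove this lemma at all; it simply refers the reader to the cited work of Wang, Wen and Li, so your argument is a self-contained verification rather than a variant of a proof given here. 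The only thing worth adding explicitly is the one-line justification $1-\mu\beta_2\geq 1>0$ (from $\mu<0$, $\beta_2\geq 0$), which you invoke but could state when you divide through at the end.
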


\begin{lemma}
\label{forlemma22}
Let Condition \ref{xyff} hold, then for any $\lambda_1$, $\lambda_2\in\mathbb{R}$ with $\lambda_2\geq\lambda_1\geq0$, the inequality
\begin{equation*}
|x-y-\lambda_1[f(x)-f(y)]|\leq\frac{1-K_2\lambda_1}{1-K_2\lambda_2}|x-y-\lambda_2[f(x)-f(y)]|
\end{equation*}
holds.
\end{lemma}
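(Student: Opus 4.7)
The plan is to reduce the statement to a one-variable monotonicity. Set $u := x - y$ and $v := f(x) - f(y)$, so that Condition \ref{xyff} becomes the scalar inequality $\langle u, v\rangle \leq K_2 |u|^2$. Since $K_2 < 0$, the denominators $1 - K_2 \lambda_i$ are strictly positive for $\lambda_i \geq 0$, so after squaring both sides, the claim is equivalent to showing that
\begin{equation*}
\Psi(\lambda) := \frac{|u - \lambda v|^2}{(1 - K_2\lambda)^2}
\end{equation*}
is non-decreasing on $[0, \infty)$. The lemma then follows by evaluating $\Psi$ at $\lambda_1 \leq \lambda_2$ and taking square roots. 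The case $u = 0$ is trivial (both sides vanish), so I will assume $u \neq 0$ throughout.

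To prove the monotonicity I would use the orthogonal decomposition $v = \rho u + w$ with $\rho := \langle u, v\rangle / |u|^2$ and $w \perp u$. Under this splitting, the hypothesis collapses to $\rho \leq K_2$, and a direct computation gives the clean formula
\begin{equation*}
\Psi(\lambda) \;=\; |u|^2 \left( \frac{1 - \rho \lambda}{1 - K_2\lambda} \right)^{\!2} + |w|^2 \left( \frac{\lambda}{1 - K_2 \lambda} \right)^{\!2}.
\end{equation*}
Since $|u|^2$ and $|w|^2$ are non-negative and do not depend on $\lambda$, it suffices to check that each of the two scalar factors on the right is non-decreasing. For the first, the inner quotient has derivative $(K_2 - \rho)/(1 - K_2\lambda)^2 \geq 0$ by $\rho \leq K_2$, and equals $1$ at $\lambda = 0$, so it is positive and non-decreasing; squaring preserves this. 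For the second, the inner quotient has derivative $1/(1 - K_2\lambda)^2 > 0$, so it increases from $0$, and again its square is non-decreasing.

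The step I expect to be the main conceptual obstacle is identifying the right monotonic quantity in the first place. Condition \ref{xyff} gives no upper bound whatsoever on $|v|$, so a naive attempt to expand $|u - \lambda v|^2 = |u|^2 - 2\lambda \langle u, v\rangle + \lambda^2|v|^2$ and compare directly to $(1 - K_2\lambda)^2$ term by term runs into an uncontrolled $\lambda^2 |v|^2$. The orthogonal splitting dissolves this: the potentially unbounded perpendicular component $|w|^2$ is parked as a non-negative coefficient on an already-increasing function, so only the parallel component $\rho$ needs to interact with the hypothesis. Once that observation is made, the remaining calculus is elementary, and the dissection also clarifies why the factor $1 - K_2\lambda$ in the denominator is the sharp normalization: it is the exact rate at which the parallel component can grow when $\rho$ saturates at $K_2$.
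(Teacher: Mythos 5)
Your argument is correct. The paper itself gives no proof of this lemma (it only cites the reference [WWL2008]), so there is nothing to match against; what you supply is a valid self-contained verification. The reduction to the monotonicity of $\Psi(\lambda)=|u-\lambda v|^2/(1-K_2\lambda)^2$ on $[0,\infty)$ is exactly the right reformulation, the denominators are indeed positive because $K_2<0$ and $\lambda\geq 0$, and the orthogonal splitting $v=\rho u+w$ with $\rho=\langle u,v\rangle/|u|^2\leq K_2$ cleanly isolates the only place the hypothesis is needed: the derivative computations $(K_2-\rho)/(1-K_2\lambda)^2\geq 0$ and $1/(1-K_2\lambda)^2>0$ check out, both inner quotients are nonnegative on $[0,\infty)$ so squaring preserves monotonicity, and the degenerate case $x=y$ forces $v=0$ as you note. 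The more pedestrian route one finds in the cited literature is to cross-multiply, expand $|u-\lambda_1 v|^2(1-K_2\lambda_2)^2-|u-\lambda_2 v|^2(1-K_2\lambda_1)^2$, and group terms against the factor $\langle u,v\rangle-K_2|u|^2\leq 0$; your decomposition buys a conceptually cleaner accounting of the unbounded component $|w|^2$ and makes visible why $1-K_2\lambda$ is the sharp normalization, at the cost of a short detour through elementary calculus. Either way the lemma holds, and your write-up has no gaps.
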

Now we are ready to present the three main lemmas in this subsection.
\begin{lemma}
\label{lemma21}
Given Condition \ref{xf}, \eqref{gg} and \eqref{ms} hold, the solution generated by ST method \eqref{STM} obeys
 \begin{equation*}
 \EE|X_k|^{2}\leq c_1,
 \end{equation*}
 where $c_1$ is a constant that does not rely on k.
\end{lemma}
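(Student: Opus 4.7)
The plan is to extract a uniform-in-$k$ bound on $\EE|X_k|^2$ by introducing the auxiliary variable $V_k := X_k - \theta h f(X_k)$ and using the hypothesis $\theta \ge 1/2$ precisely to cancel the uncontrolled $|f(X_k)|^2$ term that would otherwise block the estimate. The route parallels the proof of Lemma \ref{lemma11}, but with Lemma \ref{forlemma21} playing the structural role that the polynomial bound \eqref{ff} filled there.

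First I would rewrite the scheme \eqref{STM} in its variational form
\[
V_{k+1} \;=\; X_k + (1-\theta) h f(X_k) + g(X_k)\Delta B_k,
\]
take squared norms, and use that $\Delta B_k$ is independent of $\mathcal{F}_{t_k}$ to eliminate the cross term and obtain
\[
\EE|V_{k+1}|^2 \;=\; \EE|X_k + (1-\theta) h f(X_k)|^2 \;+\; h\,\EE|g(X_k)|^2.
\]
The heart of the argument is the algebraic identity, obtained from $X_k + (1-\theta) h f(X_k) = V_k + h f(X_k)$,
\[
|X_k + (1-\theta) h f(X_k)|^2 \;=\; |V_k|^2 + 2h\langle X_k, f(X_k)\rangle + (1-2\theta) h^2 |f(X_k)|^2.
\]
The assumption $\theta \ge 1/2$ forces $(1-2\theta) \le 0$, so the uncontrolled $|f(X_k)|^2$ term drops out in the inequality. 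Combining with Condition \ref{xf} applied to $\langle X_k, f(X_k)\rangle$ and \eqref{gg} applied to $|g|^2$ gives the one-step recursion
\[
\EE|V_{k+1}|^2 \;\le\; \EE|V_k|^2 + h(2\mu+\sigma)\,\EE|X_k|^2 + h(2a+b).
\]

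To close the argument I would couple this inequality with Lemma \ref{forlemma21} applied with $\beta_1 = 0,\ \beta_2 = \theta h$, which gives $(1-\mu\theta h)\,\EE|X_k|^2 \le \EE|V_k|^2 + 2\theta h a$ and so controls $\EE|X_k|^2$ by $\EE|V_k|^2$. Condition \eqref{ms} then powers a Lyapunov contraction: for $\theta \in (1/2, 1]$ the quantity $\Psi_k := (1-2\theta h\mu)\EE|X_k|^2 + h^2\theta^2\EE|f(X_k)|^2$ satisfies $\Psi_{k+1} \le \alpha \Psi_k + h(2a+b)$ with
\[
\alpha \;=\; \max\!\left\{\tfrac{(1-\theta)^2}{\theta^2},\ \tfrac{1 + 2(1-\theta)h\mu + h\sigma}{1 - 2\theta h\mu}\right\} \;<\; 1,
\]
and iterating gives the required uniform bound $\EE|X_k|^2 \le c_1$. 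The main obstacle is the endpoint $\theta = 1/2$, where $(1-\theta)^2/\theta^2 = 1$ and the Lyapunov no longer contracts strictly; here I would instead sum the $V_k$-recursion over $k=0,\ldots,n-1$ and feed the resulting estimate on $\sum_{k < n}\EE|X_k|^2$ back through the Lemma \ref{forlemma21} comparison, exploiting the strict negativity $2\mu+\sigma < 0$ to close the loop and recover the uniform bound.
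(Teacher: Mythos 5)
Your main line of argument is sound for $\theta\in(1/2,1]$ and is genuinely different from (and cleaner than) the paper's. The paper works with the quantity $F_k=\EE|X_k-(1-\theta+\lambda)hf(X_k)|^2+2(1-\theta+\lambda)ah$ for a tuned parameter $\lambda=\frac{2\mu+\sigma}{2\mu}\wedge(2\theta-1)$ and extracts the contraction factor $N_h(\lambda)<1$ from the comparison Lemma \ref{forlemma21} itself; you instead fix the shift at $\theta h$, exploit the same structural cancellation $(1-2\theta)h^2|f(X_k)|^2\le 0$ to get the recursion $\EE|V_{k+1}|^2\le\EE|V_k|^2+h(2\mu+\sigma)\EE|X_k|^2+h(2a+b)$, and then obtain the contraction from the explicit Lyapunov functional $\Psi_k=(1-2\theta h\mu)\EE|X_k|^2+\theta^2h^2\EE|f(X_k)|^2$ with factor $\max\{(1-\theta)^2/\theta^2,\,(1+2(1-\theta)h\mu+h\sigma)/(1-2\theta h\mu)\}$. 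I checked the $\Psi_k$ recursion: the constant does come out as $h(2a+b)$, the second ratio is $<1$ precisely because of \eqref{ms} with no step-size restriction, and the case where its numerator is negative causes no harm. This buys a more transparent proof with explicit constants, using Lemma \ref{forlemma21} only in the trivial form $\beta_1=0$, $\beta_2=\theta h$.

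The genuine gap is the endpoint $\theta=1/2$, and your proposed repair does not close it. Summing the $V_k$-recursion over $k<n$ yields
\begin{equation*}
h|2\mu+\sigma|\sum_{k=0}^{n-1}\EE|X_k|^2\;\le\;\EE|V_0|^2+nh(2a+b)-\EE|V_n|^2,
\end{equation*}
which bounds only the Ces\`aro average $\frac1n\sum_{k<n}\EE|X_k|^2$; feeding this back through $(1-\mu\theta h)\EE|X_n|^2\le\EE|V_n|^2+2\theta ha$ still requires a uniform bound on $\EE|V_n|^2$, and the summed inequality only gives $\EE|V_n|^2\le\EE|V_0|^2+nh(2a+b)$, which grows linearly in $n$. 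The missing ingredient is a reverse comparison $\EE|V_k|^2\le C\,\EE|X_k|^2+C'$, which would restore a contraction for $V_k$ itself; but that needs $\theta^2h^2|f(x)|^2\lesssim|x|^2$, i.e.\ the linear growth bound \eqref{ff}, which this lemma deliberately does not assume (that is the whole point of the $\theta\ge 1/2$ regime, cf.\ Lemma \ref{lemma11}). So either invoke \eqref{ff} for $\theta=1/2$ only, or find another mechanism. For what it is worth, the paper's own proof has the same defect: its contraction claim is stated only for $\theta\in(1/2,\theta^*]$, and at $\theta=1/2$ one has $\lambda=0$, $N_h=1$, so the term $\frac{1}{1-N_h}(2a+b)h$ there is undefined. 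Your write-up should either restrict to $\theta\in(1/2,1]$ or supply an honest argument at the endpoint rather than the summation sketch.
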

\begin{proof}
Denote
\begin{align*}
\theta^*=1+\frac{\sigma}{4\mu},~
\lambda=\frac{2\mu+\sigma}{2\mu}\wedge(2\theta-1).
\end{align*}
If $\theta\in[1/2,\theta^*]$, by the definition of $\lambda$ and Lemma \ref{forlemma21}, we have
\begin{align*}
&|X_{k+1}-(1-\theta+\lambda)hf(X_{k+1})|^2+2a(1-\theta+\lambda)h\\
\leq&|X_{k+1}-\theta f(X_{k+1})h+(2\theta-1-\lambda)f(X_{k+1})h|^2+2a(1-\theta+\lambda)h\\
\leq&|X_{k+1}-\theta f(X_{k+1})h|^2+2(2\theta-1-\lambda)\langle X_{k+1},f(X_{k+1})\rangle h\\
&+[(2\theta-1-\lambda)^2-\theta^2]|f(X_{k+1})|^2h^2+2a(1-\theta+\lambda)h\\
\leq&|X_k-(1-\theta)f(X_k)h|^2+4(1-\theta)\langle X_k,f(X_k)\rangle h+|g(X_k)|^2h\\
&+2(2\theta-1-\lambda)\langle X_{k+1},f(X_{k+1})\rangle h+2a(1-\theta+\lambda)h+G_k\\
\leq&|X_k-(1-\theta)f(X_k)h|^2+2(1-\theta)ah+4(1-\theta)\langle X_k,f(X_k)\rangle h+|g(X_k)|^2h\\
&+2(2\theta-1-\lambda)\langle X_{k+1},f(X_{k+1})\rangle h+2a(1-\theta+\lambda)h-2(1-\theta)ah+G_k\\
\leq&\frac{1-\mu(1-\theta)h}{1-\mu(1-\theta+\lambda)h}(|X_k-(1-\theta+\lambda)f(X_k)h|^2+2(1-\theta+\lambda)ah)\\
&+4(1-\theta)\langle X_k,f(X_k)\rangle h+|g(X_k)|^2h+2(2\theta-1-\lambda)\langle X_{k+1},f(X_{k+1})\rangle h\\
&+2a(1-\theta+\lambda)h-2(1-\theta)ah+G_k,
\end{align*}
where
\begin{equation*}
G_k=2\langle X_{k}+(1-\theta)hf(X_{k}),g(X_{k})\rangle\Delta B_k+g(X_{k})^2(\Delta B_k^2-h).
\end{equation*}
Denote
$$F_k=\mathbb{E}|X_k-(1-\theta+\lambda)f(X_k)h|^2+2(1-\theta+\lambda)ah,$$
and
$$N_h(\lambda)=[1-\mu(1-\theta)h]/[1-\mu(1-\theta+\lambda)h].$$
It is clear that $\mathbb{E}G_k=0$. By Condition \ref{xf} and \eqref{gg}, we have
\begin{align*}
F_{k+1}\leq& N_hF_k+[4(1-\theta)\mu h+\sigma h]\mathbb{E}|X_k|^2+2h\mu(2\theta-1-\lambda)\mathbb{E}|X_{k+1}|^2\\
&+(2a+b)h\\
\leq&N_h^{k+1}[F_0-2(2\theta-1-\lambda)\mu h|X_0|^2]+\psi_{\lambda}(h)h\sum\limits_{i=0}^{k}N_h^{k-i}\mathbb{E}|X_{i}|^2\\
&+2(2\theta-1-\lambda)\mu h\mathbb{E}|X_{k+1}|^2+\frac{1}{1-N_h}(2a+b)h,
\end{align*}
where
\begin{equation*}
\psi_{\lambda}(h)=4(1-\theta)+\sigma+2N_h\mu(2\theta-1-\lambda).
\end{equation*}
For $\theta\in(1/2,\theta^*]$, we have $\psi_{\lambda}(h)<0$, $2(2\theta-1-\lambda)\mu h\leq0$.\\
From Lemma \ref{forlemma21}, we have $\mathbb{E}|X_k|^2\leq F_k$ and $0<N_h(\lambda)<1$. Therefore, we get
\begin{align*}
\mathbb{E}|X_k|^2\leq& N_h^{k}[F_0-2(2\theta-1-\lambda)\mu h|X_0|^2]+\frac{1}{1-N_h}(2a+b)h\\
\leq&[F_0-2(2\theta-1-\lambda)\mu h|X_0|^2]+\frac{1}{1-N_h}(2a+b)h.
\end{align*}
Denote
\begin{equation*}
c_1=[F_0-2(2\theta-1-\lambda)\mu h|X_0|^2]+\frac{1}{1-N_h}(2a+b)h,
\end{equation*}
then, we have
\begin{equation*}
\EE|X_k|^{2}\leq c_1.
\end{equation*}
For $\theta\in(1+\sigma/(4\mu),1)$, choosing $\lambda'<\lambda$ sufficiently small such that $\psi_{\lambda'}(h)<0$ for any $h>0$, then using the same arguments above, we complete the proof.   \eproof
\end{proof}

\begin{lemma}
\label{lemma22}
Given Condition \ref{xyff} and \eqref{gggg} , for any two initial values $x,y\in\mathbb{R}_{d}$ with $x \neq y$ the solutions generated by the ST method \eqref{STM} obey
 \begin{equation*}
 \EE|X_{k}^x-X_{k}^y|^{2}\leq c_{3}
 \end{equation*}
with $\lim\limits_{i\rightarrow+\infty}c_{3}=0$.
\end{lemma}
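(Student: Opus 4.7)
The plan is to adapt the strategy of Lemma \ref{lemma21} to the difference process $D_k := X_k^x - X_k^y$, with Lemma \ref{forlemma22} playing the role that Lemma \ref{forlemma21} played there. Abbreviating $\widetilde{f}_j := f(X_j^x) - f(X_j^y)$ and $\widetilde{g}_j := g(X_j^x) - g(X_j^y)$, the ST recursions at initial values $x$ and $y$ subtract to
\begin{equation*}
D_{k+1} - \theta h\, \widetilde{f}_{k+1} \;=\; D_k + (1-\theta)h\, \widetilde{f}_k + \widetilde{g}_k\, \Delta B_k.
\end{equation*}
The key observation is that, in contrast with Lemma \ref{lemma21}, the additive constants $a$ and $b$ from Conditions \ref{xf} and \ref{ligrfg} cancel upon differencing, so we should expect a pure geometric decay (no additive floor), which is exactly what the statement requires via $c_3 \to 0$.

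Mimicking the parameter choice in Lemma \ref{lemma21}, I would set
\begin{equation*}
\theta^{**} \;=\; 1 + \frac{K_1}{4K_2}, \qquad \lambda \;=\; \frac{2K_2+K_1}{2K_2}\wedge(2\theta-1),
\end{equation*}
with the roles of $\mu$ and $\sigma$ now played by $K_2$ and $K_1$; note $\theta^{**}\in(1/2,1)$ and $\lambda\geq 0$ thanks to \eqref{k1k2} and $K_2<0$. For $\theta\in[1/2,\theta^{**}]$, I would expand $|D_{k+1}-(1-\theta+\lambda)h\widetilde{f}_{k+1}|^2$ by splitting the bracket as $[D_{k+1}-\theta h\widetilde{f}_{k+1}]+(2\theta-1-\lambda)h\widetilde{f}_{k+1}$, substitute the recursion in the first piece, apply Condition \ref{xyff} to estimate $\langle D_{k+1},\widetilde{f}_{k+1}\rangle\leq K_2|D_{k+1}|^2$, use \eqref{gggg} together with $\EE|\Delta B_k|^2=h$ to convert $|\widetilde{g}_k|^2|\Delta B_k|^2$ into $K_1h\,\EE|D_k|^2$, and invoke Lemma \ref{forlemma22} with $\lambda_1=(1-\theta)h$, $\lambda_2=(1-\theta+\lambda)h$ to re-express $|D_k-(1-\theta)h\widetilde{f}_k|^2$ in terms of $E_k:=\EE|D_k-(1-\theta+\lambda)h\widetilde{f}_k|^2$. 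The $\Delta B_k$ cross terms vanish under expectation. The choice of $\lambda$ is precisely what forces the coefficient of $\EE|D_k|^2$ to be non-positive, so the resulting recursion collapses to $E_{k+1}\leq N_h\,E_k$ with an explicit contraction factor $N_h=N_h(\lambda)<1$. Iterating gives $E_k\leq N_h^kE_0$, and a final application of Lemma \ref{forlemma22} (with $\lambda_1=0$, $\lambda_2=(1-\theta+\lambda)h$) dominates $\EE|D_k|^2$ by a constant multiple of $E_k$, yielding $c_3=C\,N_h^k\,|x-y|^2\to 0$.

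For $\theta\in(\theta^{**},1]$ the inequality forcing the coefficient negative may tighten at the endpoint, so exactly as at the end of Lemma \ref{lemma21} one selects $\lambda'<\lambda$ small enough to restore strictness and repeats. The main obstacle I expect is sign bookkeeping: since $K_2<0$, factors of the form $1-K_2(\cdots)h$ exceed $1$, and one must track carefully when inequalities preserve or reverse direction. The critical algebraic fact to verify is that the specified $\lambda$ produces a strictly negative coefficient of $\EE|D_k|^2$—this is exactly the point at which the standing assumption $2K_2+K_1<0$ from \eqref{k1k2} is indispensable; without it the contraction constant $N_h$ would fail to fall strictly below $1$.
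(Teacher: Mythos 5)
Your proposal is correct and follows essentially the same route as the paper's proof: the same choice of $\theta^*=1+K_1/(4K_2)$ and $\lambda=\frac{2K_2+K_1}{2K_2}\wedge(2\theta-1)$, the same splitting of $|D_{k+1}-(1-\theta+\lambda)h\widetilde{f}_{k+1}|^2$ through the $\theta h$ term, the same use of Lemma \ref{forlemma22} and of \eqref{gggg} with $\EE|\Delta B_k|^2=h$ to obtain a contraction $L_h<1$ driven by the negativity of $4(1-\theta)K_2+K_1+2(2\theta-1-\lambda)K_2L_h$, and the same adjustment $\lambda'<\lambda$ for $\theta$ beyond $\theta^*$. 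Your added remarks on the cancellation of the additive constants and on the sign bookkeeping for $K_2<0$ are accurate but do not change the argument.
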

\begin{proof}
Denote
\begin{align*}
\theta^*=1+\frac{K_1}{4K_2}, ~
\lambda=\frac{2K_2+K_1}{2K_2}\wedge(2\theta-1).
\end{align*}
If $\theta\in[1/2,\theta^*]$, by the definition of $\lambda$ and Lemma \ref{forlemma22}, we have
\begin{align*}
&\left||X_{k+1}^x-X_{k+1}^y|-(1-\theta+\lambda)h|f(X_{k+1}^x)-f(X_{k+1}^y)|\right|^2 \\
\leq &\left||X_{k+1}^x-X_{k+1}^y|-\theta h|f(X_{k+1}^x)-f(X_{k+1}^y)|\right|^2\\
&+2(2\theta-1-\lambda)h\langle X_{k+1}^x-X_{k+1}^y,f(X_{k+1}^x)-f(X_{k+1}^y)\rangle\\
\leq &\left||X_{k}^x-X_{k}^y|-(1-\theta)h|f(X_{k}^x)-f(X_{k}^y)|\right|^2\\
&+4(1-\theta)h\langle X_{k}^x-X_{k}^y,f(X_{k}^x)-f(X_{k}^y)\rangle\\
&+|g(X_{k}^x)-g(X_{k}^y)|^2h+2(2\theta-1-\lambda)h\langle X_{k+1}^x-X_{k+1}^y,f(X_{k+1}^x)-f(X_{k+1}^y)\rangle+M_k\\
\leq&\left|\frac{1-K_2(1-\theta)h}{1-K_2(1-\theta+\lambda)h}\right|^2\left||X_{k}^x-X_{k}^y|-(1-\theta+\lambda)h|f(X_{k}^x)-f(X_{k}^y)|\right|^2\\
&+4(1-\theta)h\langle X_{k}^x-X_{k}^y,f(X_{k}^x)-f(X_{k}^y)\rangle+|g(X_{k}^x)-g(X_{k}^y)|^2h\\
&+2(2\theta-1-\lambda)h\langle X_{k+1}^x-X_{k+1}^y,f(X_{k+1}^x)-f(X_{k+1}^y)\rangle+M_k,
\end{align*}
where
\begin{align*}
M_k=&2\langle|X_{k}^x-X_{k}^y|+(1-\theta)h|f(X_{k}^x)-f(X_{k}^y)|,|g(X_{k}^x)-g(X_{k}^y)|\rangle\Delta B_k\\
&+|g(X_{k}^x)-g(X_{k}^y)|^2(|\Delta B_k^2-h).
\end{align*}
Denote
$$W_k=\mathbb{E}\left||X_{k}^x-X_{k}^y|-(1-\theta)h|f(X_{k}^x)-f(X_{k}^y)|\right|^2,$$
and
$$L_h(\lambda)=|(1-K_2(1-\theta)h)/(1-K_2(1-\theta+\lambda)h)|^2.$$
It is not hard to see that $\mathbb{E}M_k=0$. By Condition \ref{xyff} and \eqref{gggg}, we have
\begin{align*}
W_{k+1}\leq& L_h W_k+[4(1-\theta)K_2h+K_1 h]\mathbb{E}|X_{k}^x-X_{k}^y|^2+2K_2 h(2\theta-1-\lambda)\mathbb{E}|X_{k+1}^x-X_{k+1}^y|^2\\
\leq&L_h^{k+1}[A_0-2(2\theta-1-\lambda)K_2 h\mathbb{E}|x-y|^2]+\varphi_{\lambda}(h)h\sum\limits_{i=0}^{k}L_h^{k-i}\mathbb{E}|X_{i}^x-X_{i}^y|^2,
\end{align*}
where $\varphi_{\lambda}^{h}=4(1-\theta)K_2+K_1+2(2\theta-1-\lambda)K_2L_h$.\\
For $\theta\in[1/2,\theta^*]$, we have $\varphi_{\lambda}^{h}<0$, $\mathbb{E}|X_k|^2\leq W_k$, and $|L_h|<1$. Then, we get
\begin{equation*}
 \mathbb{E}|X_{k}^x-X_{k}^y|^2\leq L_h^k[W_0-2(2\theta-1-\lambda)K_2h|x-y|^2].
\end{equation*}
For $\theta\in(1+K_1/(4K_2),1)$, choosing $\lambda'<\lambda$ sufficiently small such that $\psi_{\lambda'}(h)<0$ for any $h>0$ and using the same arguments above, we complete the proof. \eproof
\end{proof}

\begin{lemma} \label{lemma23}
Assume that Conditions \ref{xf}, \eqref{gg} and \eqref{ms} hold, then
\begin{equation*}
\EE\left( \sup_{0 \leq k \leq n} |X_k|^2 \right)\leq c_{2}
\end{equation*}
where $c_{2}$ is a constant that can rely on k.
\end{lemma}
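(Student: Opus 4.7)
My plan is to adapt the approach of Lemma \ref{lemma13} to the present setting, where $\theta \in [1/2,1]$ and the polynomial bound \eqref{ff} on $|f|$ is unavailable. The key structural observation, already exploited in Lemma \ref{lemma21}, is that $\theta^2 \geq (1-\theta)^2$ for $\theta \in [1/2,1]$, so when the one-step inequality is summed the $|f|^2$ contributions telescope with non-negative coefficient and can be discarded from the upper bound.

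Concretely, I would first start from the identity
\[
|X_{k+1} - \theta h f(X_{k+1})|^2 = |X_k + (1-\theta)h f(X_k) + g(X_k)\Delta B_k|^2,
\]
which follows directly from the ST scheme \eqref{STM}. Expanding both sides, applying Condition \ref{xf} to the inner-product terms $\langle X_{k+1}, f(X_{k+1})\rangle$ and $\langle X_k, f(X_k)\rangle$, and bounding $|g(X_k)|^2$ by \eqref{gg} (writing $|\Delta B_k|^2 = h + (|\Delta B_k|^2-h)$ to peel off the predictable piece), one is led to a one-step estimate of the form
\[
(1 - 2\theta h\mu)|X_{k+1}|^2 + \theta^2 h^2 |f(X_{k+1})|^2 \leq (1 + 2(1-\theta)h\mu + \sigma h)|X_k|^2 + (1-\theta)^2 h^2|f(X_k)|^2 + C_0 + M_k,
\]
where $C_0$ depends only on $a,b,h,\theta$ and $M_k$ collects the mean-zero terms involving $\Delta B_k$ and $|\Delta B_k|^2-h$. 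Summing from $k=0$ to $k=n-1$ and using $\theta^2 \geq (1-\theta)^2$, the telescoping $\sum \theta^2 h^2 |f(X_{k+1})|^2 - \sum (1-\theta)^2 h^2|f(X_k)|^2$ is bounded below by $-(1-\theta)^2 h^2 |f(X_0)|^2$, a fixed constant, and is discarded from the upper bound.

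This leaves an inequality of shape $|X_{k+1}|^2 \leq |X_0|^2 + A h \sum_{i=0}^{k} |X_i|^2 + B(k+1) + \sum_{i=0}^{k} M_i$ for constants $A,B$ depending on the parameters. Taking the supremum over $0 \leq k \leq n$ and then expectation, the deterministic sum $h \sum_i \EE|X_i|^2$ is bounded by $n h c_1$ via the uniform moment bound of Lemma \ref{lemma21}, while the martingale contribution $\EE \sup_k \bigl|\sum_{i<k} M_i\bigr|$ is handled by Doob's $L^2$ maximal inequality, reducing it to $\sum_i \EE M_i^2$, which in turn is controlled in terms of $\EE|X_i|^2$ and $\EE|g(X_i)|^2$ and bounded again by Lemma \ref{lemma21}. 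A final application of the discrete Gronwall inequality, Lemma \ref{disGronine}, closes the estimate with $c_2$ depending on $n$, as the statement permits. The principal obstacle is precisely the $|f(X_{k+1})|^2$ term on the left of the one-step bound: without \eqref{ff} it cannot be absorbed pointwise in $|X_{k+1}|^2$, and the argument succeeds only because the implicit weight $\theta$ dominates the explicit weight $1-\theta$ in this regime, which is the structural reason \eqref{ff} may be dropped when $\theta \in [1/2,1]$.
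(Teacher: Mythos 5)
Your deterministic bookkeeping is sound and is essentially the paper's: the paper also starts from $|X_{k+1}-\theta f(X_{k+1})h|^2=|X_k+(1-\theta)f(X_k)h+g(X_k)\Delta B_k|^2$, and the coefficient it discards, $(1-2\theta)|f(X_k)|^2h^2\leq 0$, is exactly your observation $\theta^2\geq(1-\theta)^2$ (note $1-2\theta=(1-\theta)^2-\theta^2$) applied at a single index rather than telescoped across indices; either way the $|f|^2$ terms disappear without invoking \eqref{ff}. The gap is in your treatment of the noise terms. Doob's $L^2$ maximal inequality reduces $\EE\sup_k\bigl|\sum_{i<k}M_i\bigr|$ to $\sum_i\EE M_i^2$, but $M_i^2$ contains $|X_i|^2|g(X_i)|^2|\Delta B_i|^2$, $|g(X_i)|^4(|\Delta B_i|^2-h)^2$ and, worst of all, $(1-\theta)^2h^2|f(X_i)|^2|g(X_i)|^2|\Delta B_i|^2$; these require fourth moments of $X_i$ and second moments of $f(X_i)$, neither of which follows from Condition \ref{xf}, \eqref{gg}, \eqref{ms}, or from Lemma \ref{lemma21} (which only gives $\EE|X_i|^2\leq c_1$). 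So your claim that $\EE M_i^2$ is ``controlled in terms of $\EE|X_i|^2$ and $\EE|g(X_i)|^2$'' does not hold as stated.

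Switching to the cruder bound $\EE\sup_k\bigl|\sum_{i<k}M_i\bigr|\leq\sum_{i\leq n}\EE|M_i|$ (which is what the paper uses) repairs two of the three pieces but not the cross term $2(1-\theta)h\langle f(X_i),g(X_i)\rangle\Delta B_i$: its absolute first moment still forces an estimate on $\EE(|f(X_i)|\,|g(X_i)|)$, hence on $\EE|f(X_i)|^2$ after Young's inequality, which is precisely what dropping \eqref{ff} forbids; and for $\theta=1/2$ there is no leftover margin $(\theta^2-(1-\theta)^2)h^2|f(X_i)|^2$ from your telescoping to absorb it into. The paper's proof avoids this by never separating $f(X_k)$ from $X_k$ in the stochastic terms: it writes $X_k+(1-\theta)f(X_k)h=\tfrac1\theta X_k-\tfrac{1-\theta}{\theta}\bigl(X_k-\theta f(X_k)h\bigr)$, so the noise cross terms are $\langle X_k,g(X_k)\Delta B_k\rangle$ and $\langle X_k-\theta f(X_k)h,g(X_k)\Delta B_k\rangle$, and Young's inequality produces only $\EE|X_k|^2$ and $\EE|X_k-\theta f(X_k)h|^2$ --- the latter being exactly the quantity the discrete Gronwall inequality is run on, with the return to $\EE|X_k|^2$ made only at the end via $|X_k-\theta f(X_k)h|^2\geq(1+2\theta h\mu)|X_k|^2+2a\theta h$. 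You need this (or an equivalent) grouping to close the argument.
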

\begin{proof}
From (\ref{STM}), we have
\begin{equation*}
|X_{k+1} - \theta f(X_{k+1}) h |^2 = |X_k - \theta f(X_k)h + f(X_k)h + g(X_k) \Delta B_k|^2.
\end{equation*}
Rewriting the right hand side, we have
\begin{align*}
|X_{k+1} - \theta f(X_{k+1}) h |^2
&= |X_{k} - \theta f(X_{k}) h |^2 + 2 \langle X_k, f(X_k)h \rangle + (1 - 2 \theta) |f(X_k)|^2 h^2 \\
&~~+ |g(X_k)\Delta B_k|^2 + \frac{2}{\theta} \langle X_k, g(X_k)\Delta B_k \rangle \\
&~~- \frac{2(1 - \theta)}{\theta} \langle X_k - \theta f(X_k)h, g(X_k)\Delta B_k \rangle.
\end{align*}
Due to the fact that $\theta \in [1/2,1]$ and Condition \ref{monocond}, we have
\begin{align*}
|X_{k+1} - \theta f(X_{k+1}) h |^2
&\leq |X_{k} - \theta f(X_{k}) h |^2 + |g(X_k)\Delta B_k|^2 + 2(\mu |X_k|^2 + a)h \\
&~~+ \frac{2}{\theta} \langle X_k, g(X_k)\Delta B_k \rangle +  \frac{2(1 - \theta)}{\theta} \langle X_k - \theta f(X_k)h, g(X_k)\Delta B_k \rangle.
\end{align*}
Summarising both sides yields
\begin{align}\label{bESup}
|X_{k+1} - \theta f(X_{k+1}) h |^2
&\leq |X_0 - \theta f(X_0) h |^2 + \sum_{i=0}^k |g(X_i) \Delta B_i|^2 + 2ah(k+1) + 2\mu h \sum_{i=0}^k |X_i|^2 \nonumber \\
&~~+ \frac{2}{\theta} \sum_{i=0}^k \langle X_k, g(X_k)\Delta B_k \rangle +  \frac{2(1 - \theta)}{\theta} \sum_{i=0}^k \langle X_k - \theta f(X_k)h, g(X_k)\Delta B_k \rangle.
\end{align}
By the elementary inequality, it is not hard to see that
\begin{align} \label{ESupdB1}
&~~~~\EE \left( \sup_{0 \leq k \leq n} \left| \sum_{i=0}^k \langle X_i - \theta f(X_i)h, g(X_i)\Delta B_i \rangle \right| \right)\nonumber \\
&\leq \EE \left( \sum_{i=0}^n \bigg|  \langle X_i - \theta f(X_i)h, g(X_i)\Delta B_i \rangle \bigg| \right) \nonumber \\
&\leq \frac{1}{2} \sum_{i=0}^n \EE \left| X_i - \theta f(X_i)h \right|^2 + \frac{1}{2} \sum_{i=0}^n \EE \left| g(X_i) \Delta B_i \right|^2.
\end{align}
Using (\ref{gg}) and the fact that $\EE|\Delta B_i|^2 = h$, we have
\begin{equation}\label{ESupg}
\EE \left( \sup_{0 \leq k \leq n} \sum_{i=0}^k |g(X_i) \Delta B_i|^2 \right) \leq \sum_{i=0}^n \EE (\sigma |X_i|^2 + b)h.
\end{equation}
Applying the elementary inequality and (\ref{ESupg}), we have
\begin{align}\label{Esupxgw}
\EE \left( \sup_{0 \leq k \leq n} \sum_{i=0}^k \langle X_i, g(X_i)\Delta B_i \rangle \right)
&\leq \sum_{i=0}^n \EE \left| \langle X_i, g(X_i)\Delta B_i \rangle  \right| \nonumber \\
&\leq \frac{1}{2}  \sum_{i=0}^n \EE |X_i|^2 + \leq \frac{1}{2}  \sum_{i=0}^n \EE (\sigma |X_i|^2 + b)h.
\end{align}
Now, taking the expectation and the supreme on both sides of (\ref{bESup}) and using (\ref{ESupdB1}), (\ref{ESupg}), and (\ref{Esupxgw}) we have
\begin{align}
\label{beforegronwall}
\EE \left( \sup_{0 \leq k \leq n} |X_{k+1} - \theta f(X_{k+1}) h |^2 \right) &\leq \EE \left| X_0 - \theta f(X_0) h \right|^2 + K \sum_{i=0}^n \EE |X_k|^2 \nonumber \\
&~~+\frac{1}{2} \sum_{i=0}^n \EE \left| X_i - \theta f(X_i)h \right|^2 + Kh,
\end{align}
where K is a generic constant.
Due to Condition \ref{monocond}, we have
\begin{align}
\label{yandyminus}
|X_{k} - \theta f(X_{k}) h |^2 &= |X_{k}|^2 - 2 \theta h \langle X_k, f(X_k) \rangle + \theta^2 h^2 |f(X_{k})|^2 \nonumber \\
&\geq |X_{k}|^2  - 2 \theta h \left( - \mu |X_{k}|^2 - a   \right) + \theta^2 h^2 |f(X_{k})|^2  \nonumber \\
&\geq (1 + 2 \theta h \mu) |X_{k}|^2 + 2 a \theta h.
\end{align}
Applying the discrete version of the Gronwall inequality and (\ref{yandyminus}) to (\ref{beforegronwall}), the assertion holds. \eproof
\end{proof}
\par \noindent
Combining Lemmas \ref{lemma21}, \ref{lemma22} and \ref{lemma23} and using Chebyshev's inequality, we derive the existence and uniqueness of the stationary distribution of the ST method with $\theta \in [1/2, 1]$ from Theorem \ref{mainthm}.

\subsection{The Convergence}
\label{theconvergencesec}
\label{conver}
Given Conditions \ref{ffgg} to \ref{ligrfg}, the convergence of the numerical stationary distribution to the underlying stationary distribution is discussed in this subsection.
\par
Recall that the probability measure induced by the numerical solution, $X_k$, is denoted by $\PP_k(\cdot,\cdot)$, similarly we denote the probability measure induced by the underlying solution,$x(t)$, by $\bar{\PP}_t(\cdot,\cdot)$.

\begin{lemma}
\label{numundlem}
Let Conditions \ref{ffgg} to \ref{ligrfg} hold and fix any initial value $x_0 \in \RR^d$. Then, for any given $T_1>0$ and $\e >0$ there exists a sufficiently small $\D t^* >0$ such that
\begin{equation*}
d_{\LL}(\bar{\PP}_{k \D t}(x_0,\cdot),\PP_k(x_0,\cdot)) < \e
\end{equation*}
provided that $\D t < \D t^*$ and $k \D t \leq T_1$.
\end{lemma}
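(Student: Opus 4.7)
The plan is to reduce the $d_{\LL}$-bound to an $L^2$ strong-convergence estimate for the ST method on the finite horizon $[0,T_1]$, which is accessible under the global Lipschitz assumption (Condition \ref{ffgg}) and the monotonicity assumption (Condition \ref{xyff}). Since every $F \in \LL$ satisfies $|F(x)-F(y)| \leq |x-y|$, coupling the two trajectories to the same Brownian path gives
\begin{equation*}
\left| \EE F(x(k\D t)) - \EE F(X_k) \right| \leq \EE |x(k\D t) - X_k| \leq \left( \EE |x(k\D t) - X_k|^2 \right)^{1/2}.
\end{equation*}
Taking the supremum over $F \in \LL$, the lemma reduces to proving
\begin{equation*}
\max_{0 \leq k \D t \leq T_1} \EE |x(k \D t) - X_k|^2 \leq C(T_1, x_0)\, \D t,
\end{equation*}
after which one chooses $\D t^* < \e^{2}/C(T_1,x_0)$.

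For the strong-convergence estimate I would set $e_k = x(t_k) - X_k$ and subtract the integral form of (\ref{SDE}) from (\ref{STM}) to obtain
\begin{equation*}
e_{k+1} - \theta h [f(x(t_{k+1})) - f(X_{k+1})] = e_k + (1-\theta) h [f(x(t_k)) - f(X_k)] + R_k^{(1)} + R_k^{(2)},
\end{equation*}
where $R_k^{(1)} = \int_{t_k}^{t_{k+1}} [f(x(s)) - f(x(t_k))]\,ds - \theta h [f(x(t_{k+1})) - f(x(t_k))]$ is the deterministic one-step consistency error and $R_k^{(2)} = \int_{t_k}^{t_{k+1}} [g(x(s)) - g(X_k)]\,dB(s)$ is the martingale remainder. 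Squaring both sides, applying Condition \ref{xyff} to $\langle e_{k+1}, f(x(t_{k+1})) - f(X_{k+1})\rangle \leq K_2 |e_{k+1}|^2$ (and using $K_2 < 0$ so that $1 - 2\theta h K_2 > 1$), dropping the nonnegative term $\theta^2 h^2 |f(x(t_{k+1})) - f(X_{k+1})|^2$, and expanding the right-hand side via the elementary inequality together with the Lipschitz bound in Condition \ref{ffgg} and It\^o's isometry (noting $\EE R_k^{(2)} = 0$), one reaches
\begin{equation*}
\EE |e_{k+1}|^2 \leq (1 + C_1 h)\, \EE |e_k|^2 + \EE |R_k^{(1)}|^2 + \EE |R_k^{(2)}|^2.
\end{equation*}

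To close the recursion I would use uniform second-moment bounds on the continuous-time solution: the dissipativity and linear growth in Conditions \ref{xf} and \ref{ligrfg} yield $\sup_{0 \leq t \leq T_1} \EE |x(t)|^2 < \infty$ and the standard continuity estimate $\EE |x(s) - x(t_k)|^2 \leq C h$ uniformly for $s \in [t_k,t_{k+1}]$, which combined with the Lipschitz conditions of Condition \ref{ffgg} give $\EE |R_k^{(1)}|^2 \leq C h^3$ and $\EE |R_k^{(2)}|^2 \leq C h(\EE |e_k|^2 + h)$. Substituting produces $\EE |e_{k+1}|^2 \leq (1 + C h) \EE |e_k|^2 + C h^2$, and Lemma \ref{disGronine} with $e_0 = 0$ yields $\max_{0 \leq k \D t \leq T_1} \EE |e_k|^2 \leq C(T_1,x_0)\, \D t$, closing the argument. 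The main obstacle is bookkeeping for $R_k^{(1)}$ and ensuring that the final constant depends only on $T_1$ and $x_0$ (not on $\D t$ or $k$): one must rewrite the quadrature error as increments of $f$ along the continuous path and then control those increments by the modulus of continuity of $x(\cdot)$, which forces us to carry the uniform second-moment bounds on both $x(t)$ and $f(x(t))$ through every step, exactly the place where Conditions \ref{xf} and \ref{ligrfg} are used.
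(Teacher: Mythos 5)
Your proposal is correct and follows essentially the same route as the paper: the paper's entire ``proof'' is the single remark that the lemma follows from the finite-time strong convergence of the ST method, citing \cite{ZWH2015}, which is exactly your reduction $d_{\LL}(\bar{\PP}_{k\D t},\PP_k)\leq\sup_{F\in\LL}\EE|F(x(k\D t))-F(X_k)|\leq(\EE|x(k\D t)-X_k|^2)^{1/2}$ followed by an order-$\D t$ mean-square error bound on $[0,T_1]$. The only difference is that you supply the strong-convergence argument (error recursion, consistency and martingale remainders, discrete Gronwall) that the paper delegates to the reference; your sketch is sound under the stated global Lipschitz and monotonicity conditions, modulo routine Young-inequality bookkeeping for the cross term between $R_k^{(1)}$ and $R_k^{(2)}$, which is not $\F_{t_k}$-measurable.
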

The result can be derived from the finite time strong convergence of the ST method \cite{ZWH2015}.
\par
Now we are ready to show that the numerical stationary distribution converges to the underlying stationary distribution as time step diminishes.
\begin{theorem}
Given Conditions \ref{ffgg} to \ref{ligrfg}, then
\begin{equation*}
\lim_{\D t \rightarrow 0} d_{\LL} (\Pi_{\D t}(\cdot), \pi(\cdot)) = 0.
\end{equation*}
\end{theorem}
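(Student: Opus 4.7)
The natural approach is a triangle-inequality argument: for any fixed initial point $x_0 \in \RR^d$ and any positive integer $k$,
\begin{equation*}
d_{\LL}(\Pi_{\D t}, \pi) \leq d_{\LL}(\Pi_{\D t}, \PP_k(x_0,\cdot)) + d_{\LL}(\PP_k(x_0,\cdot), \bar{\PP}_{k\D t}(x_0,\cdot)) + d_{\LL}(\bar{\PP}_{k\D t}(x_0,\cdot), \pi).
\end{equation*}
Given $\e > 0$, the plan is to choose $k$ as a function of $\D t$ and then a threshold $\D t^* > 0$ so that each of the three pieces is below $\e/3$ whenever $\D t < \D t^*$.

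For the third term, under Conditions \ref{ffgg}--\ref{ligrfg} the underlying SDE admits $\pi$ as its unique stationary distribution and $\bar{\PP}_t(x_0,\cdot) \to \pi$ weakly as $t \to \infty$; hence there exists $T_1 = T_1(\e,x_0)$ with $d_{\LL}(\bar{\PP}_t(x_0,\cdot), \pi) < \e/3$ for every $t \geq T_1$. I would then take $k = \lceil T_1/\D t \rceil$, so that $k\D t \in [T_1, T_1 + \D t]$. This immediately controls the third term, and applying Lemma \ref{numundlem} with horizon $T_1 + 1$ (in place of its $T_1$) bounds the second term below $\e/3$ once $\D t$ is small enough.

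The first term is handled using the invariance of $\Pi_{\D t}$: writing $\Pi_{\D t}(\cdot) = \int \PP_k(y,\cdot)\, \Pi_{\D t}(dy)$ and invoking the Lipschitz bound from the definition of $d_{\LL}$ followed by Jensen,
\begin{equation*}
d_{\LL}(\PP_k(x_0,\cdot), \Pi_{\D t}) \leq \int_{\RR^d} \EE|X_k^{x_0} - X_k^y|\, \Pi_{\D t}(dy) \leq \bar{C}_3^{k/2} \int_{\RR^d} |x_0 - y|\, \Pi_{\D t}(dy),
\end{equation*}
where $\bar{C}_3 \in (0,1)$ is the per-step contraction constant from Lemma \ref{lemma12} (or Lemma \ref{lemma22} for $\theta \in [1/2,1]$). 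A short Taylor expansion of $\bar{C}_3$ in $h = \D t$ yields $\bar{C}_3 = 1 + (K_1+2K_2)\D t + O(\D t^2)$, which, because $2K_2+K_1<0$ (respectively $2\mu+\sigma<0$), gives $\bar{C}_3^k \leq \exp(-c k \D t)$ for a constant $c>0$ and all sufficiently small $\D t$. With $k = \lceil T_1/\D t \rceil$ this is bounded by $e^{-cT_1}$, a quantity independent of $\D t$ that can be made arbitrarily small by enlarging $T_1$. The second-moment bounds of Lemmas \ref{lemma11} and \ref{lemma21}, passed to the weak limit, show that $\int |y|\,\Pi_{\D t}(dy)$ is bounded uniformly for small $\D t$, so enlarging $T_1$ at the outset makes the first term fall below $\e/3$.

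The main obstacle, and the reason the argument is non-trivial, is that the three constraints on $k$ must all hold simultaneously as $\D t \to 0$: $k$ must be large enough that $\PP_k(x_0,\cdot)$ is $\e/3$-close to $\Pi_{\D t}$, $k\D t$ must exceed $T_1$ so the third term is small, yet $k\D t$ must stay bounded so Lemma \ref{numundlem} is applicable. The Taylor expansion of $\bar{C}_3$ is precisely what reconciles these: because the per-step contraction rate is $1-O(\D t)$, the $\approx T_1/\D t$ steps needed to reach continuous time $T_1$ also supply just enough contraction to close the gap between $\PP_k(x_0,\cdot)$ and $\Pi_{\D t}$, uniformly in small $\D t$.
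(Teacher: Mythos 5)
Your proof is correct and follows essentially the same route as the paper: the identical three-term triangle inequality through $\PP_k(x_0,\cdot)$ and $\bar{\PP}_{k\D t}(x_0,\cdot)$, using ergodicity of the underlying SDE for the third term, Lemma \ref{numundlem} for the second, convergence of the numerical chain to $\Pi_{\D t}$ for the first, and a choice of $k$ pinning $k\D t$ near a fixed finite horizon so that all three constraints hold simultaneously. The one point where you go beyond the paper is the first term: the paper simply asserts a $\D t$-uniform threshold $\Theta^{**}$ as a consequence of Theorem \ref{mainthm}, whereas you derive it explicitly from the invariance of $\Pi_{\D t}$, the contraction constant $\bar{C}_3=1+(2K_2+K_1)\D t+O(\D t^2)$ of Lemma \ref{lemma12} (or its analogue in Lemma \ref{lemma22}), and the $\D t$-uniform moment bound of Lemma \ref{lemma11} --- which is precisely the uniformity the paper's argument relies on but does not spell out.
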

\begin{proof}
Fix any initial value $x_0 \in \RR^d$ and set $\e >0$ to be an arbitrary real number. Due to the existence and uniqueness of the stationary distribution of the underlying equation, there exists a $\Theta^* >0$ such that for any $t > \Theta^*$
\begin{equation*}
d_{\LL} (\bar{\PP}_t(x_0,\cdot),\pi(\cdot)) < \e/3.
\end{equation*}
Similarly, by Theorem \ref{mainthm}, there exists a pair of $\D t^{**}>0$ and $\Theta^{**} >0$ such that
\begin{equation*}
d_{\LL} (\PP_k(x_0,\cdot),\Pi_{\D t}(\cdot)) < \e/3
\end{equation*}
for all $\D t < \D t^{**}$ and $k \D t > \Theta^{**}$.
Let $\Theta = \max(\Theta^*,\Theta^{**})$, from Lemma \ref{numundlem} there exists a $\D t^*$ such that for any $\D t < \D t^*$ and $k \D t < \Theta + 1$
\begin{equation*}
d_{\LL} (\bar{\PP}_{k \D t}(x_0,\cdot),\PP_k(x_0,\cdot)) < \e/3.
\end{equation*}
Therefore, for any $\D t < \min(\D t^*,\D t^{**})$, set $k = [\Theta /\D t] + 1/\D t$, we see the assertion holds by the triangle inequality. \eproof
\end{proof}

\section{Simulations}\label{secsimu}

We present three numerical results in this section to demonstrate the theoretical results. The first one is a linear scale SDE with the true stationary distribution known. The second one is also a scale SDE but with the super-linear drift coefficient, of which the stationary distribution can be found by solving some ordinary differential equation. The third one is a two dimensional case.

\begin{expl}
\begin{equation}
\label{1dlexpl}
dx(t)=-\alpha x(t)dt+\sigma dB(t)\quad on\quad ~~~t\geq0.
\end{equation}
\end{expl}
Given any initial value $ X_{0}=x(0)\in \RR$, from (\ref{STM}) we have
\begin{equation*}
X_{k+1}=X_{k}-\alpha\theta X_{k+1} h -(1-\theta)\alpha X_{k} h+\sigma \Delta B_{k}.
\end{equation*}
This gives that $X_{k+1}$ is normally distributed with mean
\begin{equation*}
E(X_{k+1})=(1-\frac{\alpha h}{1+\alpha \theta h})^{k+1} x(0).
\end{equation*}
And the variance is
\begin{align*}
Var(X_{k+1})
&=\frac{(1+\theta \alpha h)^{-2}}{(1+\theta \alpha h)^{2}}(1-\alpha h+\theta \alpha h)^{2}Var(X_{k})+\sigma^{2}h(1+\theta \alpha h)^{-2}\\
&=\sigma^{2}h[(1+\theta \alpha h)^{-2}+(1+\theta \alpha h)^{-4}(1-\alpha h+\theta \alpha h)^{2}\\
&+(1+\theta \alpha h)^{-6}(1-\alpha h+\theta \alpha h)^{4}+\cdots +(1+\theta \alpha h)^{-2(k+1)}(1-\alpha h+\theta \alpha h)^{2k}]\\
&=\frac{1-[(1+\theta \alpha h)^{-2(k+1)}(1-\alpha h+\theta \alpha h)^{2(k+1)}]}{(1+\alpha \theta h)^{2}-((1-\alpha h+\theta \alpha h)^{2}}\\
&=\frac{\sigma^{2}}{2\alpha-\alpha^{2}h+2\alpha^{2}\theta h}.
\end{align*}
So the distribution of the solution generated by the ST method approaches the normal distribution $N(0,\frac{\sigma^{2}}{2\alpha-\alpha^{2}h+2\alpha^{2}\theta h})$ as $k\rightarrow\infty$.
\par
Choosing $\alpha = \sigma =2$, we draw several pictures. In this setting, the true stationary distribution is the standard normal distribution.
\par

Figure \ref{1DLinDes} shows the empirical density function of the numerical solution to (\ref{1dlexpl}). Here the step size is chosen to be 0.001, the terminal time is 10 and the initial value is 2. 1000 sample paths with the $\theta = 1/2$ are used to draw the graph. It can be seen that with the time advancing the density function is tending to an stable one, which indicates the existence of the stationary distribution.

\begin{figure}[htb]
\centering $
\begin{array}{c}
\includegraphics[width=5in]{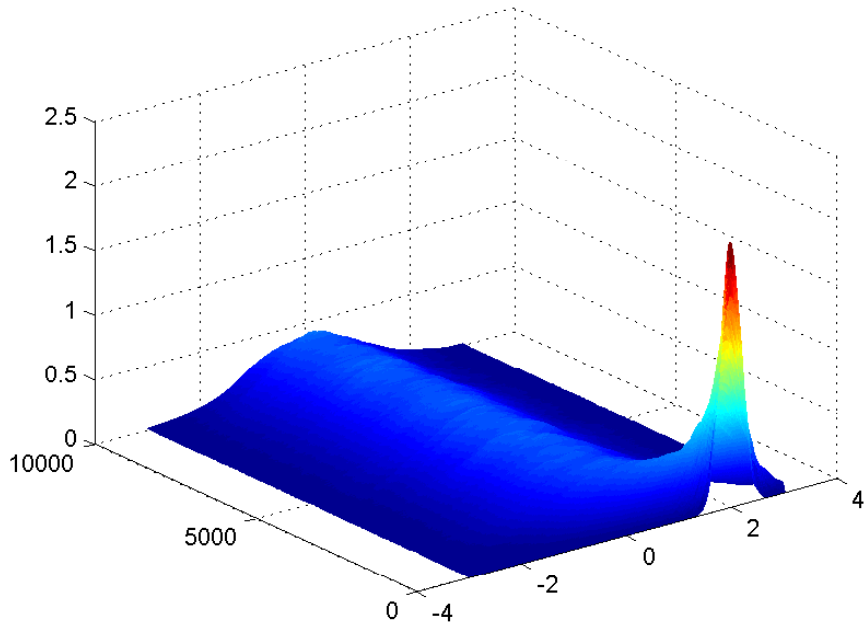}
\end{array}$
\caption{The empirical density function along the number of iterations }
\label{1DLinDes}
\end{figure}

Now, we use the Kolmogorov-Smirnov test (K-S test) \cite{M1951a} to measure the difference between the numerical stationary distribution and the true stationary distribution. Figure \ref{1DLinKS} displays the changes in the p value as the time advances. It can be seen that after roughly t=1.8, the K-S test indicates that one can not reject that the samples generated by the numerical method are from the true distribution with the $95\%$ confidence.

\begin{figure}[h]
\centering$
\begin{array}{c}
\includegraphics[width=5in]{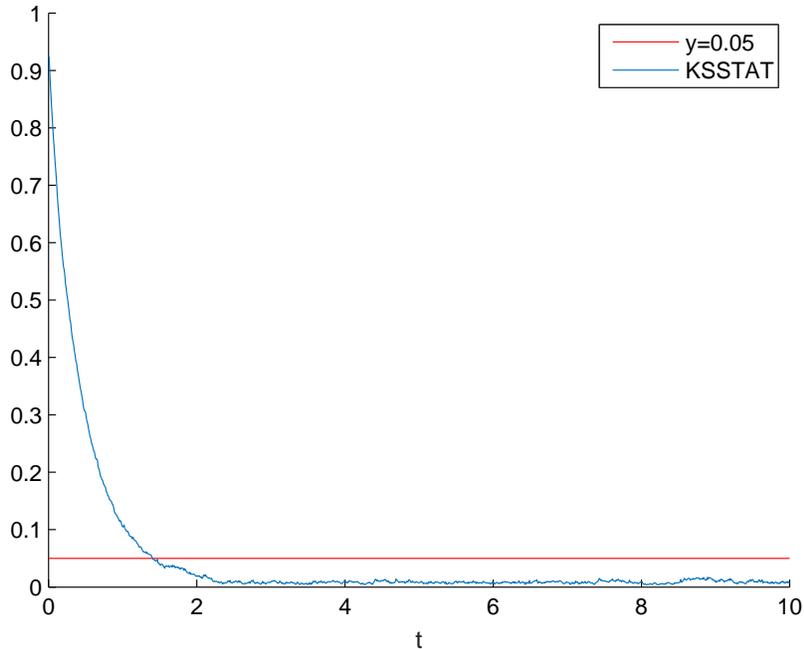}
\end{array}$
\caption{The p values along the time line}
\label{1DLinKS}
\end{figure}

The convergent rate of the numerical stationary distribution with different choices of $\theta$ to the true stationary distribution is plotted in Figure \ref{1DLinrate}. The step sizes, $2^{-1},~2^{-2},~2^{-3},~2^{-4}$, are used at $T=10$. It can be seen that the convergent rate is approximately one.

\begin{figure}[h]
\begin{center}$
\begin{array}{cc}
\includegraphics[width=3in]{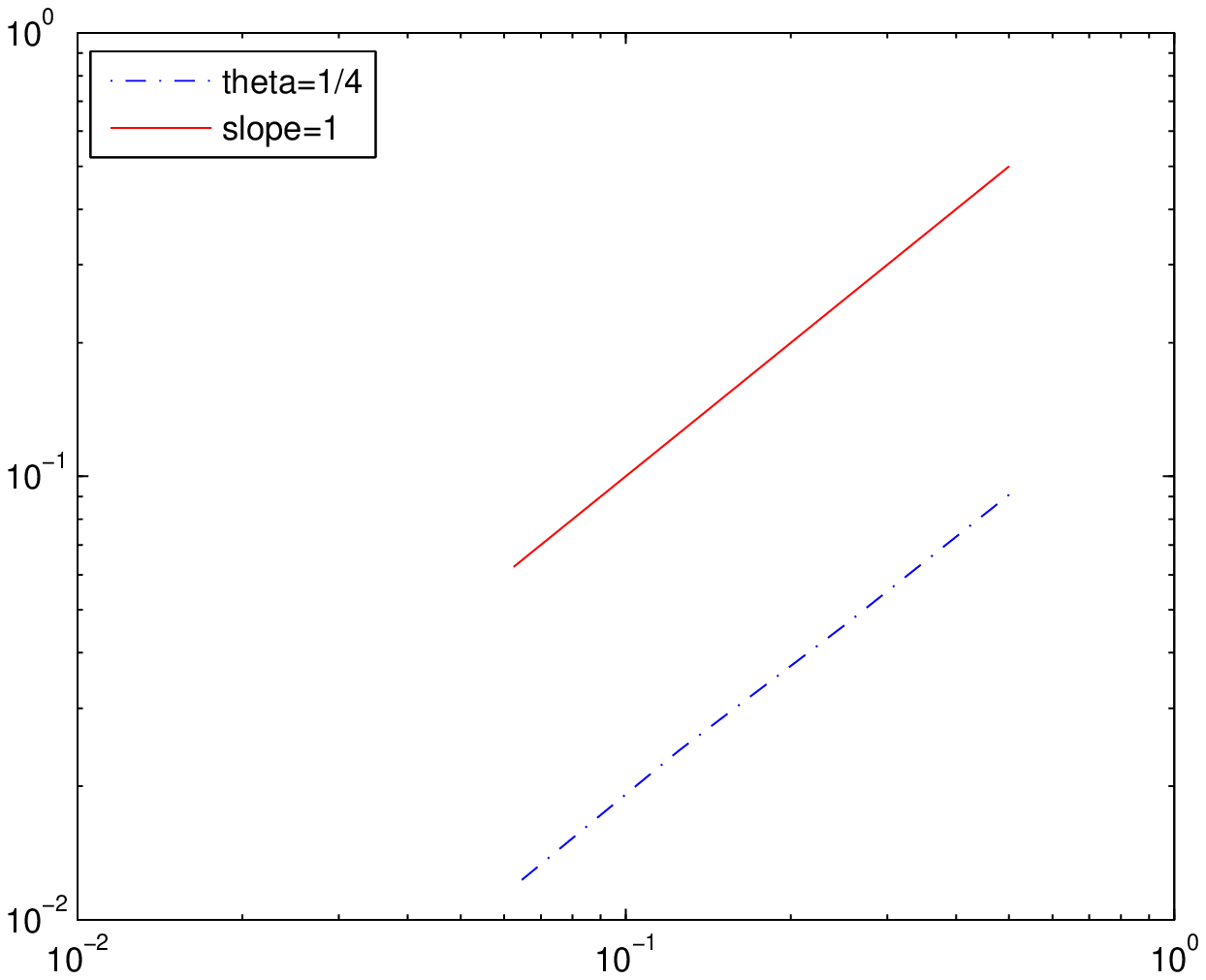}&
\includegraphics[width=3in]{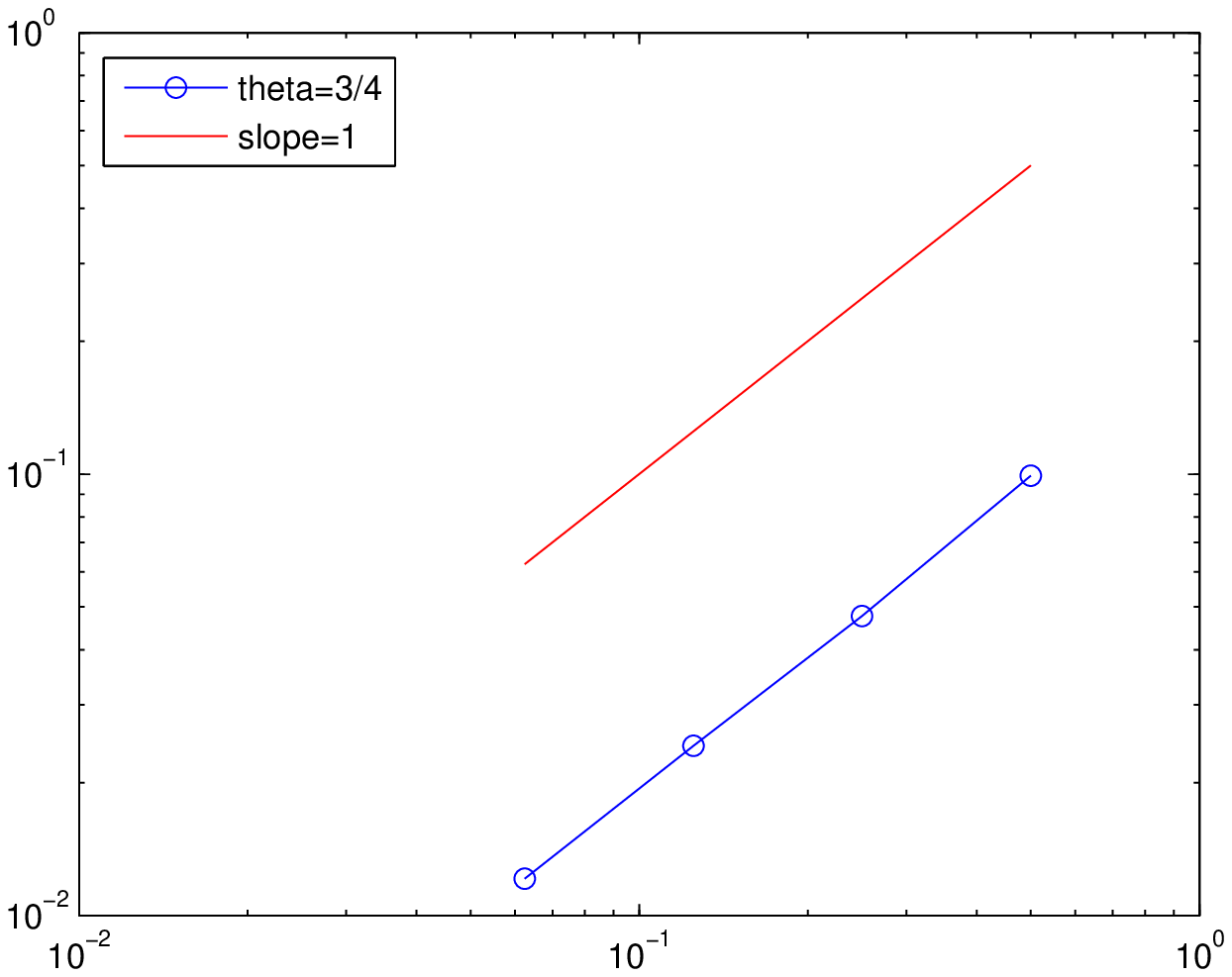}
\end{array}$
\end{center}
\caption{Convergent rate of the linear case}
\label{1DLinrate}
\end{figure}

Next, we consider the SDE with the super-linear drift coefficient.
\begin{expl}
\label{splod}
\begin{equation*}
 dx(t)=-0.5(x(t)+x^{3}(t))dt+dB(t),
\end{equation*}
with $x(0)=x_0$.
\end{expl}

The corresponding Kolmogorov-Fokker-Planck equation for the theoretical probability density function of the stationary distribution $p(x)$ is
\begin{equation*}
 0.5\frac{d^2 p(x)}{dx^2} - \frac{d}{dx} (-0.5(x+x^3)p(x)) = 0.
\end{equation*}
And the exact solution is known to be \cite{SOO1973}
\begin{equation*}
 p(x) = \frac{1}{I_{\frac{1}{4}}(\frac{1}{8})+I_{-\frac{1}{4}}(\frac{1}{8})} \exp(\frac{1}{8}-\frac{1}{2} x^2 - \frac{1}{4}x^4),
\end{equation*}
where $I_{\nu}(x)$ is a modified Bessel function of the first kind.
\par
Figure \ref{1Dsp3d} shows the changes of the  empirical density function with the time advancing. It can be seen that with the time variable increasing the center of the density function rapidly moves from the initial value, 2, to the theoretical centre one. And the density function is quite stable as time goes large.

\begin{figure}[htb]
\begin{center}$
\begin{array}{c}
\includegraphics[width=5in]{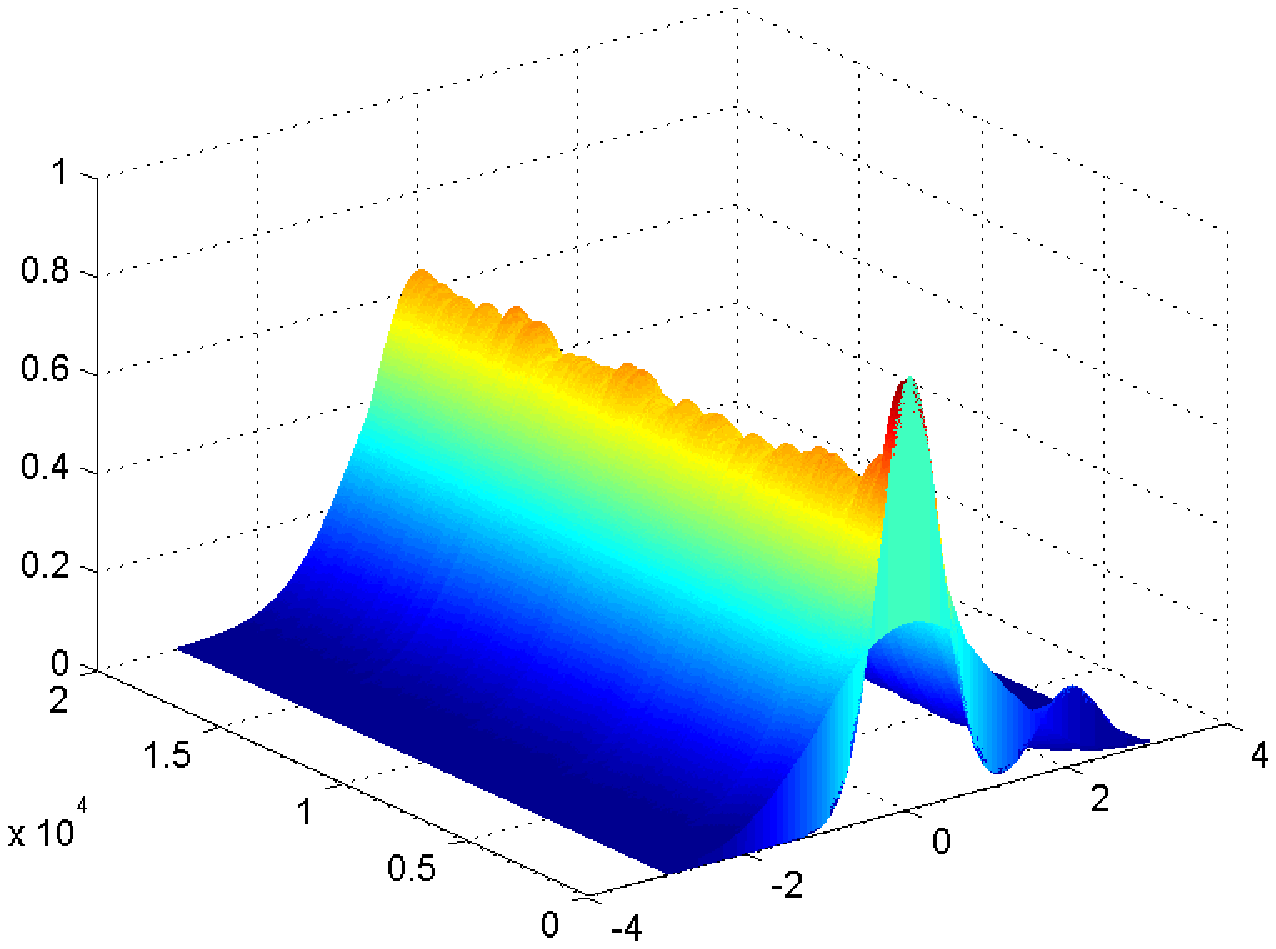}
\end{array}$
\end{center}
\caption{The empirical density function along the time line}
\label{1Dsp3d}
\end{figure}

Figure \ref{1Dsprate} shows the convergent rate at $T=10$ with the step sizes, $2^{-1},~2^{-2},~2^{-3},~2^{-4}$. It can be seen that the rate for the super-linear case is not as good as the linear case, but the plots still show the convergence as the step size getting small.

\begin{figure}[htb]
\begin{center}$
\begin{array}{cc}
\includegraphics[width=3in]{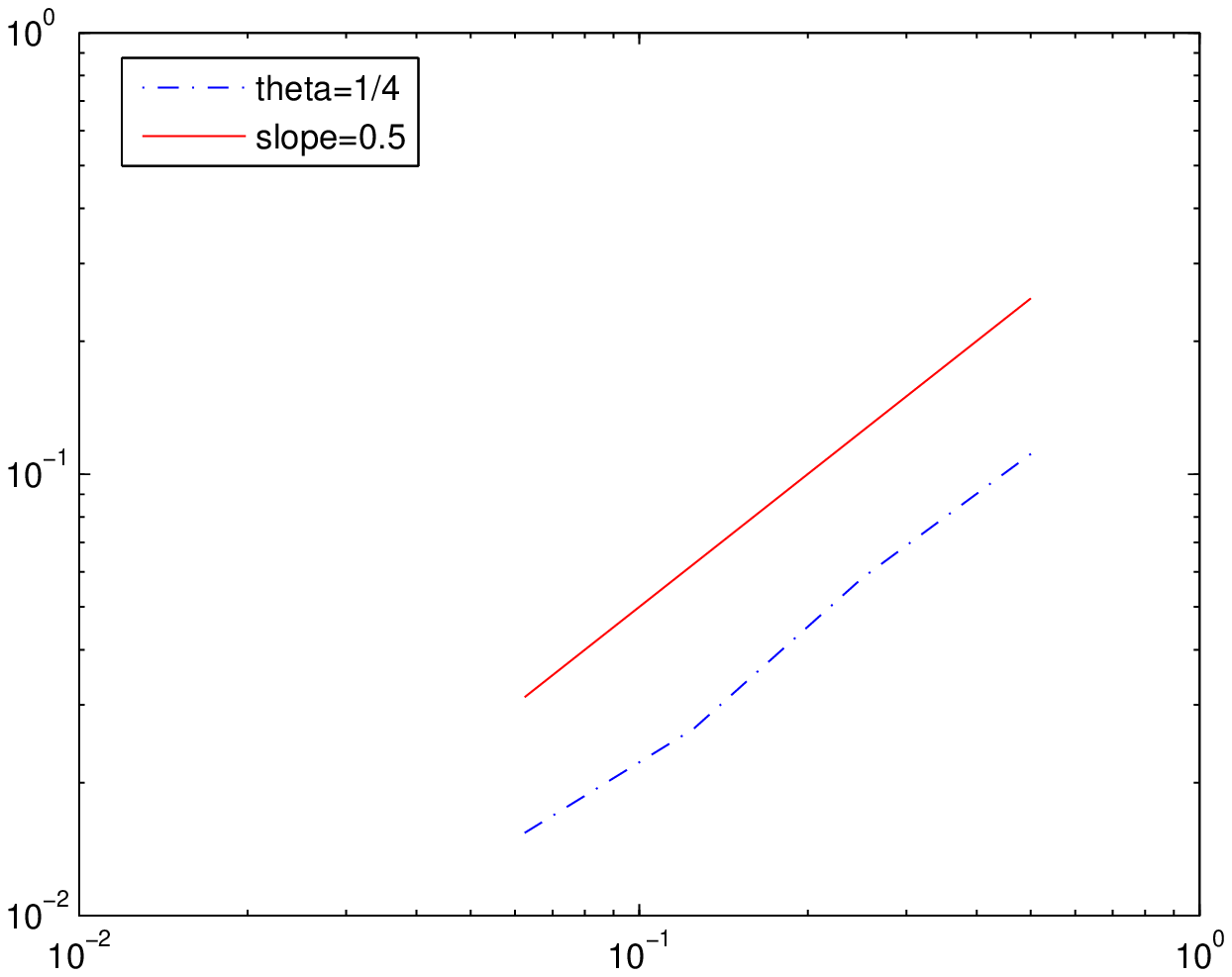}&
\includegraphics[width=3in]{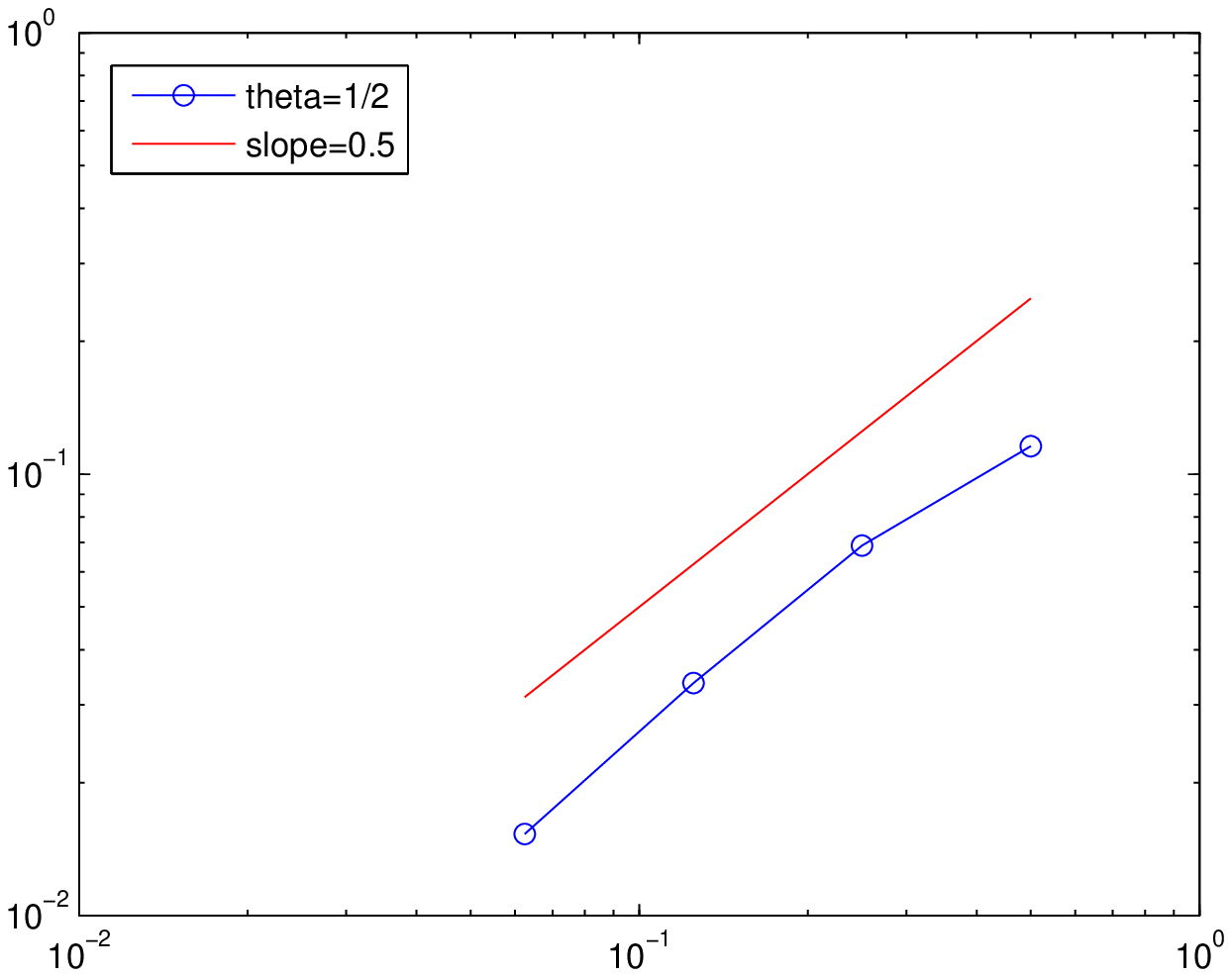}
\end{array}$
\end{center}
\caption{Convergent rate of the super-linear case}
\label{1Dsprate}
\end{figure}

Figure \ref{1DspKS} shows the change of the p value in the K-S test with the time increasing. It can be seen that after the time $t=3$ approximately one can not reject the numerical samples are from the true distribution with $95\%$ confidence. This is a little bit worse than the linear case, as one need to wait a bit longer to see the stationary distribution.

\begin{figure}[htb]
\begin{center}$
\begin{array}{c}
\includegraphics[width=5in]{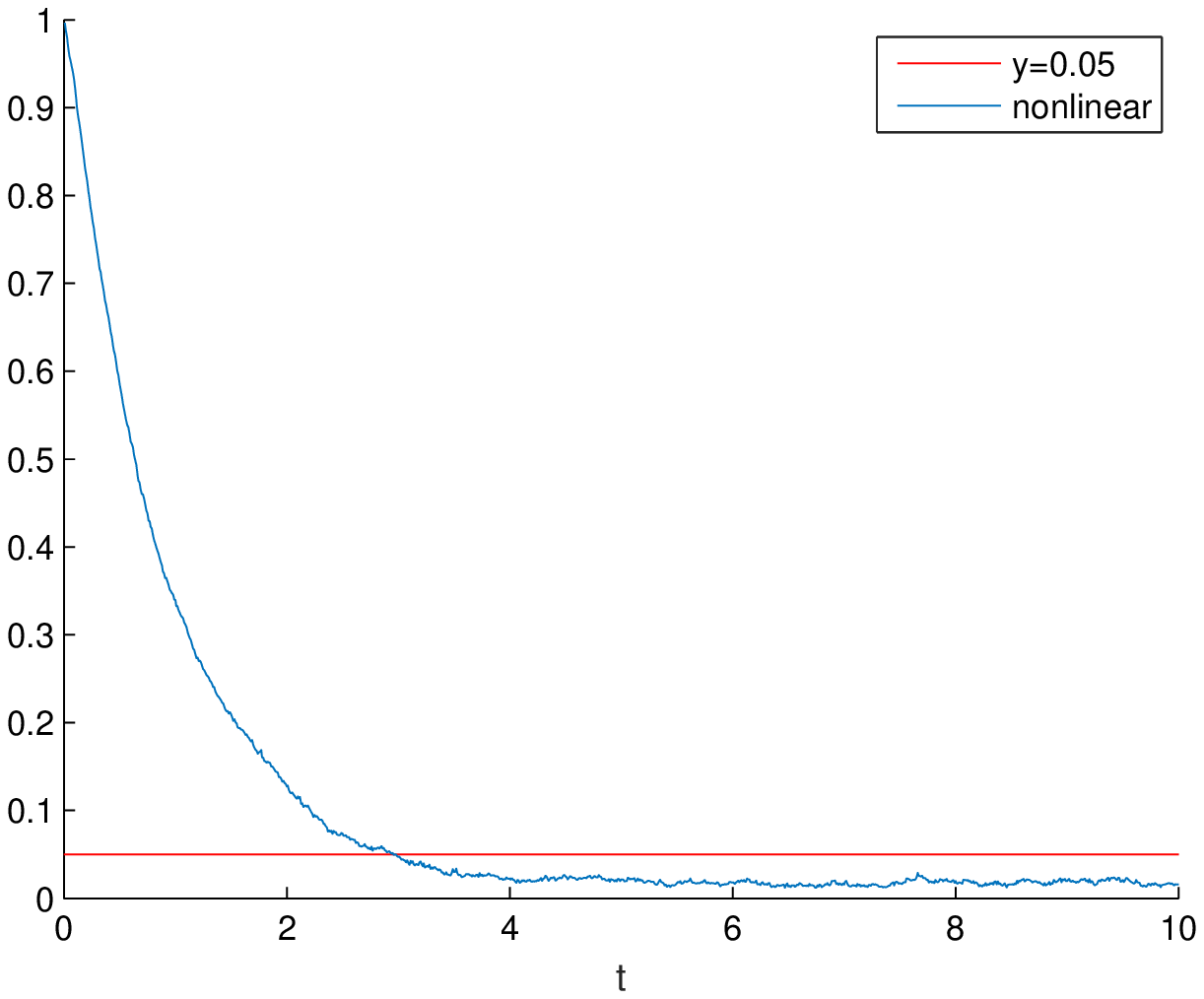}
\end{array}$
\end{center}
\caption{The p value along the time line}
\label{1DspKS}
\end{figure}

We turn to the two dimensional super-linear SDEs.
\begin{expl}
\begin{equation*}
d\left[
\begin{matrix}
x_{1}(t)\\
x_{2}(t)
\end{matrix}
\right]
=\left[
\begin{matrix}
-x_{1}^{3}(t)-5x_{1}(t)+x_{2}(t)+5\\
-x_{2}^{3}(t)-x_{1}(t)-5x_{2}(t)+5
\end{matrix}
\right]dt+\left[
\begin{matrix}
x_{1}(t)-x_{2}(t)+3\\
-x_{1}(t)-x_{2}(t)+3
\end{matrix}
\right]dB(t),
\end{equation*}
with some initial data.
\end{expl}

We check Condition \ref{xyff}, \eqref{k1k2} and \eqref{gggg}. Using the fundamental inequality that $ab \leq 1/2(a^2 + b^2)$, we have
\begin{align*}
\langle x-y,&f(x) - f(y)\rangle\\
&=(x_{1}-y_{1},x_{2}-y_{2})\left(
\begin{matrix}
f_{1}(x)-f_{1}(y)\\
f_{2}(x)-f_{2}(y)
\end{matrix}
\right)\\
&=(x_{1}(t)-y_{1}(t),x_{2}(t)-y_{2}(t))\left(
\begin{matrix}
-x_{1}^{3}(t)-5x_{1}(t)+x_{2}(t)+y_{1}^{3}(t)+5y_{1}(t)-y_{2}(t)\\
-x_{2}^{3}(t)-5x_{2}(t)-x_{1}(t)+y_{2}^{3}(t)+5y_{2}(t)+y_{1}(t)
\end{matrix}
\right)\\
&=((-x_{1}^{3}(t)+y_{1}^{3}(t))+5(y_{1}(t)-x_{1}(t))+(x_{2}(t)-y_{2}(t)))(x_{1}(t)-y_{1}(t))\\
&\qquad +((-x_{2}^{3}(t)+y_{2}^{3}(t))+5(y_{2}(t)-x_{2}(t))+(-x_{1}(t)+y_{1}(t)))(x_{2}(t)-y_{2}(t))\\
&=(-x_{1}^{3}(t)+y_{1}^{3}(t))(x_{1}(t)-y_{1}(t))+(-x_{2}^{3}(t)+y_{2}^{3}(t))(x_{2}(t)-y_{2}(t))\\
&\qquad -5((x_{1}(t)-y_{1}(t))^{2}+(x_{2}(t)-y_{2}(t))^{2})\\
&=-(x_{1}(t)-y_{1}(t))^{2}(y_{1}^{2}(t)+x_{1}(t)y_{1}(t)+x_{1}^{2}(t))-(x_{2}(t)-y_{2}(t))^{2}(y_{2}^{2}(t)+x_{2}(t)y_{2}(t)+x_{2}^{2}(t))\\
&\qquad -5((x_{1}(t)-y_{1}(t))^{2}+(x_{2}(t)-y_{2}(t))^{2}).
\end{align*}
Since
\begin{align*}
y_{1}^{2}(t)+x_{1}(t)y_{1}(t)+x_{1}^{2}(t)
\geq\frac{y_{1}^{2}(t)+2x_{1}(t)y_{1}(t)+x_{1}^{2}(t)}{2}
\geq\frac{(x_{1}(t)+y_{1}(t))^2}{2}
\geq0,
\end{align*}
we have
\begin{align*}
\langle(x-y),(f(x) - f(y)) \rangle &\leq ((x_{1}(t)-y_{1}(t))^{2}+(x_{2}(t)-y_{2}(t))^{2})-5((x_{1}(t)-y_{1}(t))^{2}+(x_{2}(t)-y_{2}(t))^{2})\\
&=-4((x_{1}(t)-y_{1}(t))^{2}+(x_{2}(t)-y_{2}(t))^{2}).
\end{align*}
Also, we have
\begin{align*}
|g(x)-g(y)|^{2}&=((x_{1}(t)-y_{1}(t))-(x_{2}(t)-y_{2}(t)))^{2}+((y_{2}(t)-x_{2}(t))-(x_{1}(t)-y_{1}(t)))^{2}\\
&=2((x_{1}(t)-y_{1}(t))^{2}+(x_{2}(t)-y_{2}(t))^{2}),
\end{align*}
and
\begin{equation*}
2\langle(x-y),(f(x) - f(y)) \rangle + |g(x)-g(y)|^{2}\leq -6((x_{1}(t)-y_{1}(t))^{2}+(x_{2}(t)-y_{2}(t))^{2}).
\end{equation*}

We plot the two dimensional empirical density function at different time. The initial values are $[2,3]^T$, the step size is 0.1 and $2 \times 10^6$ sample points are used to draw the plots. We can see from Figure \ref{2DsT} that when the time is small the density function changes quite a lot even within a small time interval. But with time goes by, the density function stabilise to some certain shape, which could be regarded as the stationary distribution. Figure \ref{2DlT} shows almost no difference between the empirical density functions at $T=18$ and $T=20$.

\begin{figure}[htb]
\begin{center}$
\begin{array}{cc}
\includegraphics[width=3in]{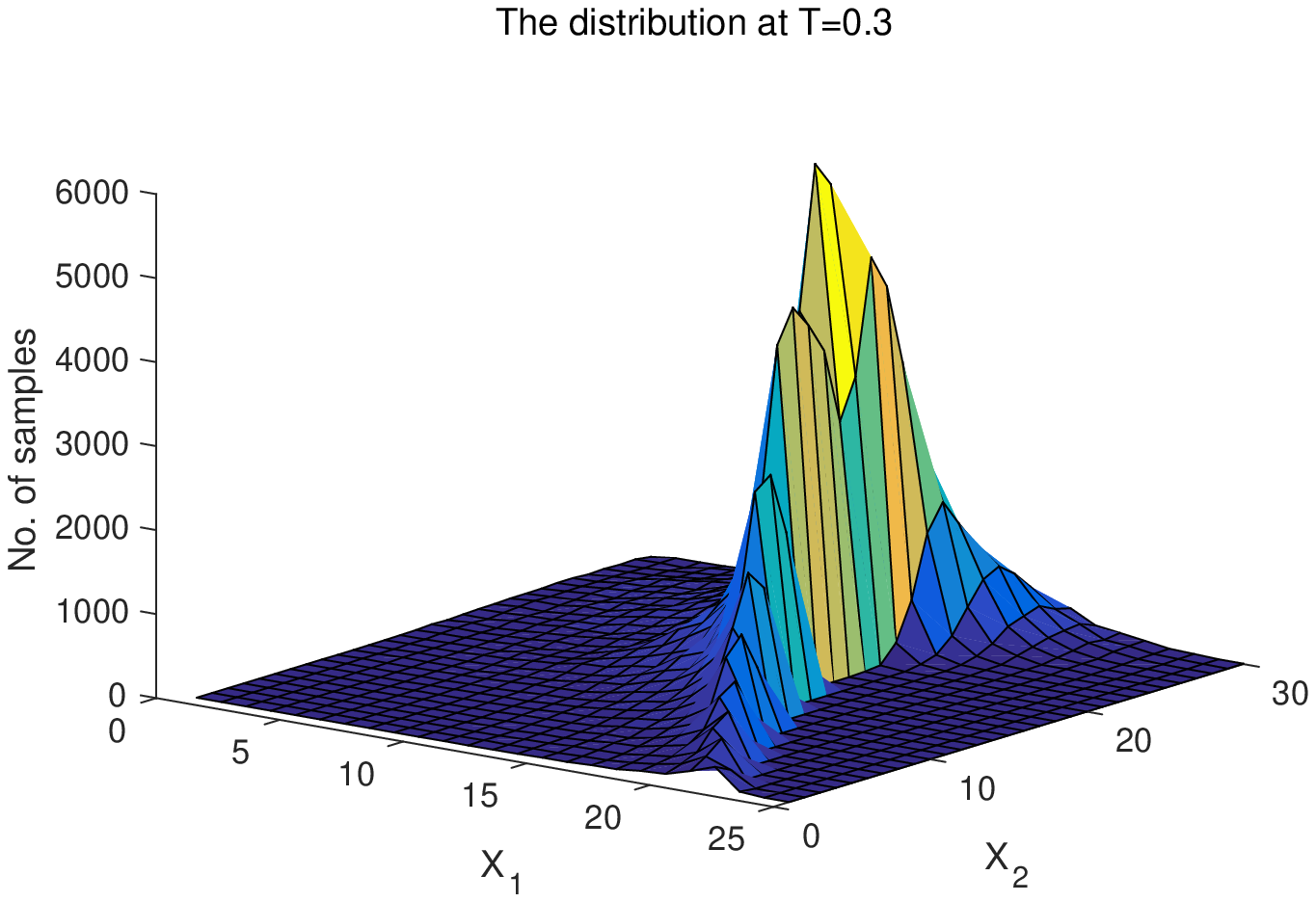}&
\includegraphics[width=3in]{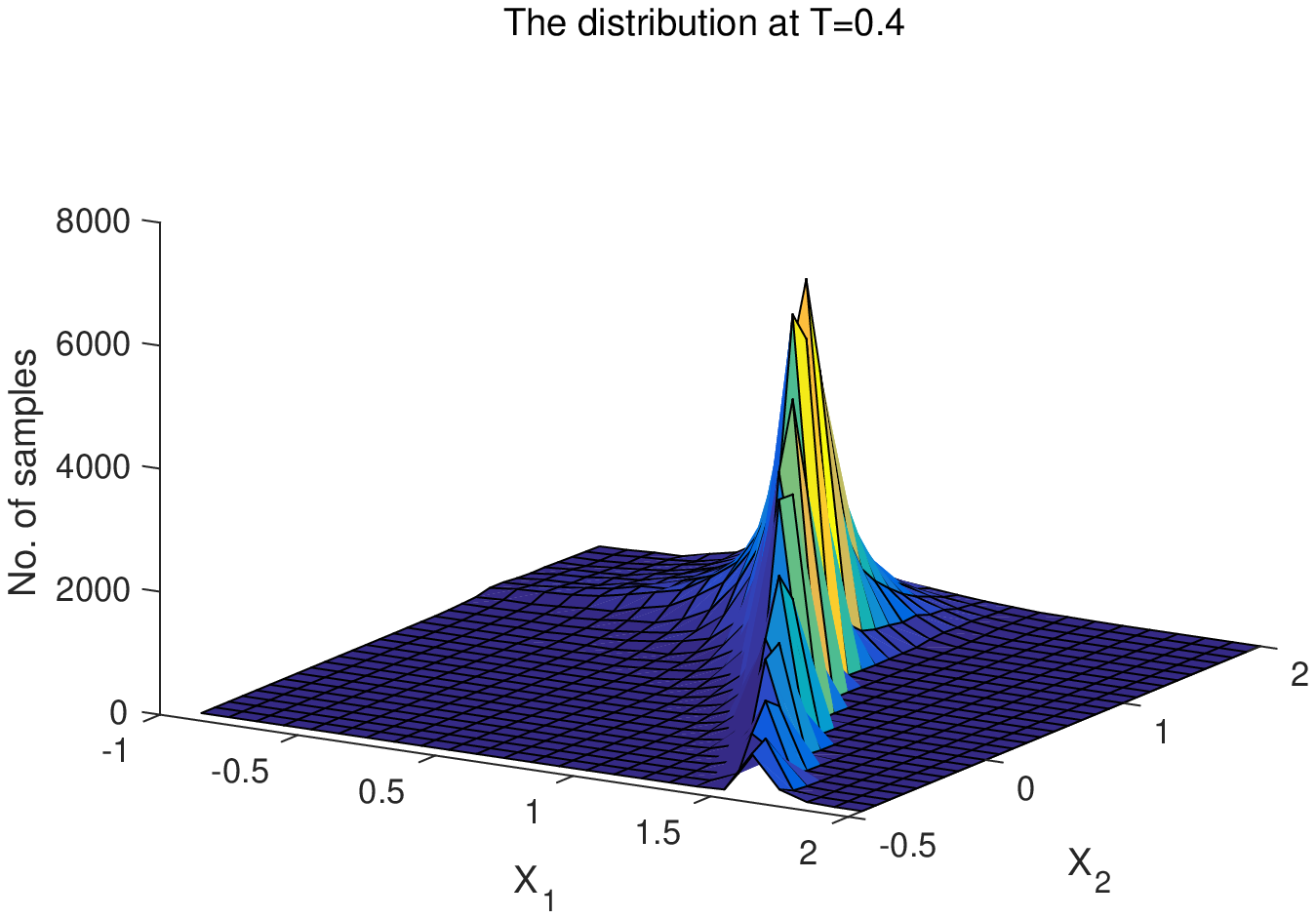}
\end{array}$
\end{center}
\caption{The empirical density function at the small time}
\label{2DsT}
\end{figure}

\begin{figure}[htb]
\begin{center}$
\begin{array}{cc}
\includegraphics[width=3in]{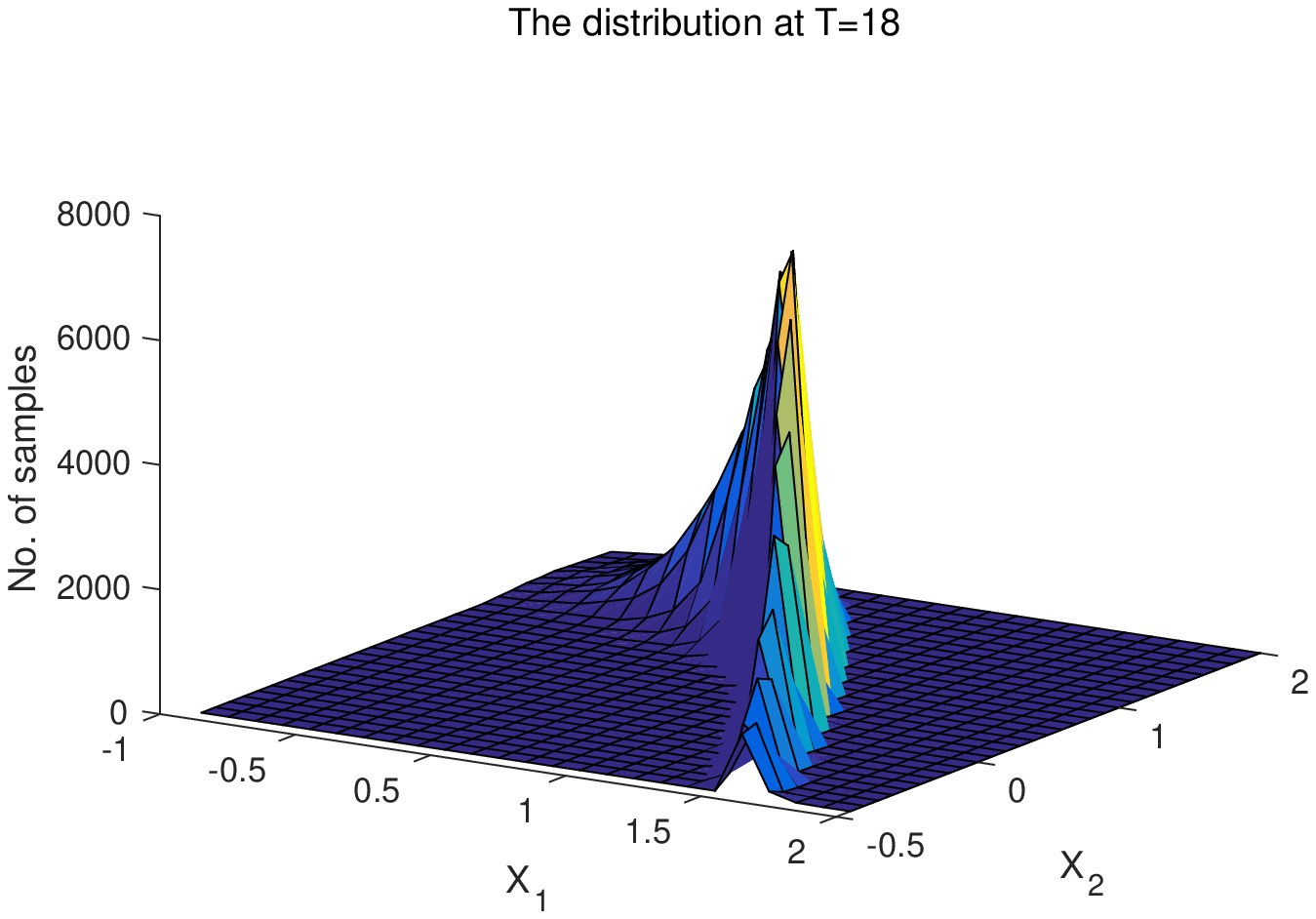}&
\includegraphics[width=3in]{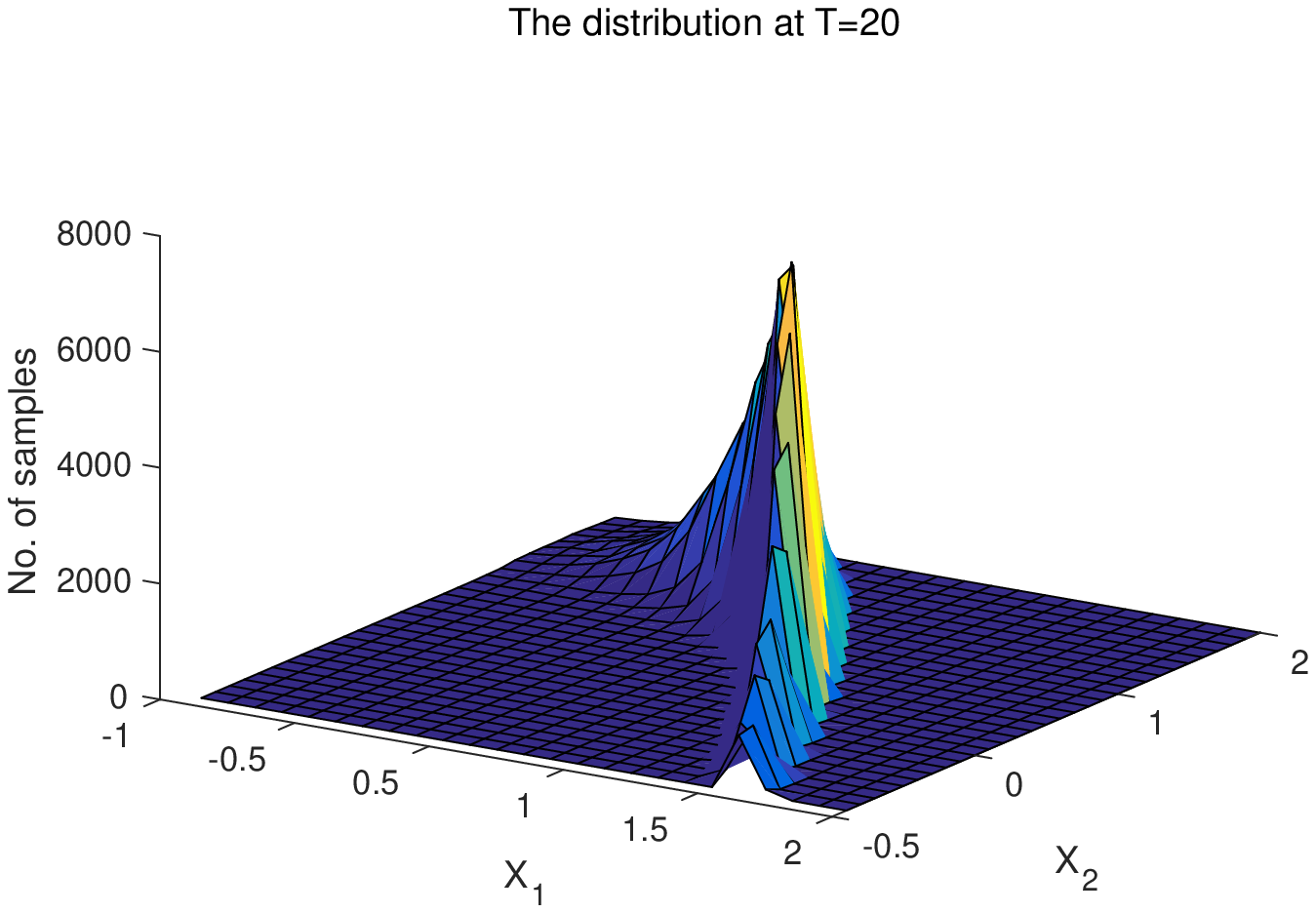}
\end{array}$
\end{center}
\caption{The empirical density function at the relatively large time}
\label{2DlT}
\end{figure}

\section{Conclusion} \label{secconclu}
In this paper, we study the numerical stationary distributions generated by the stochastic theta methods. Both the drift and diffusion coefficients are required to satisfy the global Lipschitz condition when $\theta \in [0, 1/2)$, but some super-linear terms are allowed to appear in the drift coefficient when $\theta \in [1/2, 1]$. Three numerical examples are given to show that the convergence and convergent rate of the numerical stationary distributions to their true counterparts. The plots also indicate that the numerical stationary distributions from the numerical solutions to SDEs could be used to approximate some non-linear deterministic differential equations.


\clearpage

\section*{References}

\end{document}